\documentclass[11pt]{article}

\usepackage{amsmath,amsthm,amssymb,amsfonts}
\usepackage{geometry}
\usepackage{hyperref}
\usepackage{enumitem}
\geometry{a4paper, margin=1in}

\newcommand{\F}{\mathbb{F}}
\newcommand{\Z}{\mathbb{Z}}
\newcommand{\PG}{\mathrm{PG}}
\newcommand{\GL}{\mathrm{GL}}
\newcommand{\PGL}{\mathrm{PGL}}
\newcommand{\ord}{\mathrm{ord}}
\newcommand{\CR}{\mathrm{CR}}
\newcommand{\Fr}{\mathrm{Fr}}
\newcommand{\diag}{\mathrm{diag}}

\newtheorem{theorem}{Theorem}[section]
\newtheorem{lemma}[theorem]{Lemma}
\newtheorem{proposition}[theorem]{Proposition}
\newtheorem{corollary}[theorem]{Corollary}
\theoremstyle{definition}
\newtheorem{definition}[theorem]{Definition}
\theoremstyle{remark}
\newtheorem{remark}[theorem]{Remark}
\newtheorem{example}[theorem]{Example}

\title{Regular Cyclic $(q+1)$-Arcs in $\PG(3,2^m)$:
Spectral Rigidity, Descent, and an MDS Criterion}

\author{Bocong Chen$^1$, Jing Huang$^2$ and
Hao Wu$^1$\footnote{E-mail addresses: {\it
mabcchen@scut.edu.cn (B. Chen),
jhuangmath@foxmail.com (J. Huang),
mawuhao\_math@mail.scut.edu.cn (H. Wu).
}}
}

\date{\small
$1.$ School of Mathematics, South China University of Technology, Guangzhou 510641, China\\
$2.$
School of Mathematics and Information Science,
Guangzhou University, Guangzhou 510006, China\\
}

\begin{document}
\maketitle

\begin{abstract}
Let $q=2^m$ with $m\ge 3$ and set $n:=q+1$.
We investigate $(q+1)$-arcs $\mathcal A\subset \mathrm{PG}(3,q)$ that admit a regular cyclic subgroup
$C\le \mathrm{PGL}(4,q)$ of order $n$.
Over  $K=\mathbb{F}_{q^2}$, such an action
can be conjugated to a diagonal one,
producing explicit cyclic monomial  models
\[
\mathcal M_a = \{[1:t:t^a:t^{a+1}]:t\in U_n\}\subset \mathrm{PG}(3,K),
\qquad U_n=\{u\in K^\times:u^n=1\},
\]
with $a\in(\mathbb{Z}/n\mathbb{Z})^\times$.
We develop
 a  spectral rigidity principle  to
  obtain a precise descent criterion:
$\mathcal M_a$ is $K$-projectively equivalent to a $(q+1)$-arc defined over $\mathbb{F}_q$
if and only if $a\equiv \pm 2^e \pmod n$ for some integer $e$ with $\gcd(e,m)=1$.
Consequently, regular cyclic pairs $(\mathcal A,C)$ fall into exactly $\varphi(m)/2$
$K$-projective equivalence classes.
As an immediate coding-theoretic application, we resolve the remaining AMDS/MDS dichotomy for the BCH family
$\mathcal C_{(q,q+1,3,h)}$ studied by Xu et al.:
$\mathcal C_{(q,q+1,3,h)}$ is MDS
 if and only if
$2h+1\equiv \pm 2^e \pmod n$ for some $e$ with $\gcd(e,m)=1$.
The underlying spectral rigidity step is formulated in a general setting for diagonal regular cyclic pairs
in $\mathrm{PG}(r,K)$, providing a portable reduction of projective equivalence questions to explicit congruences on exponent data.

\medskip
\textbf{2020 MSC:} 51E21, 51E20, 20B25, 11T71, 94B15.

\textbf{Keywords:} $(q+1)$-arc;  projective equivalence; MDS code; BCH code; finite projective space.
\end{abstract}

\section{Introduction}
\label{sec:intro}
Point sets with strong incidence constraints in
finite projective spaces lie at the interface of finite geometry,
algebraic curves, and coding theory. 
A set $\mathcal{A}\subset\mathrm{PG}(3,q)$ of size $n$ is an $n$-arc
if no four points of $\mathcal{A}$ lie in a plane.
These sets are the geometric counterparts of linear $[n,n-4,5]$ MDS codes;
see, for example, \cite{Bruen,HirschfeldBook,Huffman} and the survey \cite{BallLavrauw}.
The extremal case $n=q+1$ is particularly significant as it represents the maximum possible size of an arc in $\mathrm{PG}(3,q)$.
In the even characteristic setting $q=2^{m}$ and $m\geq 3$, 
$(q+1)$-arcs in $\mathrm{PG}(3,q)$ exhibit remarkable structural rigidity. 
Indeed,  Casse and Glynn proved that every $(q+1)$-arc in $\mathrm{PG}(3,q)$ is projectively equivalent over $\mathbb{F}_{q}$
to a Frobenius-twisted normal rational curve
\[
A_{e}=\bigl\{[1:t:t^{2^{e}}:t^{2^{e}+1}]:t\in\mathbb{F}_{q}\bigr\}\cup\{[0:0:0:1]\},
\qquad\gcd(e,m)=1,
\]
generalizing the twisted cubic.
The Casse--Glynn theorem also states that the full stabilizer of 
$A_e$---that is, the group of all projectivities preserving the 
set---is isomorphic to $\mathrm{PGL}(2,q)$; see \cite{CasseGlynn,HirschfeldBook}.

\medskip
A recurring theme in both geometry and coding theory 
is that large symmetry forces rigidity.
On the coding side, cyclic and constacyclic structures lead to efficient constructions and algebraic tests,
while on the geometric side they correspond to sharply transitive group actions on point sets.
Motivated by this, we study $(q+1)$-arcs equipped with a distinguished cyclic symmetry.
Let $n:=q+1$.
We call $(\mathcal{A},C)$ a \emph{regular cyclic pair} if $\mathcal{A}\subset\mathrm{PG}(3,q)$ is a $(q+1)$-arc and
$C\leq\mathrm{PGL}(4,q)$ is a cyclic subgroup of order $n$ acting regularly (sharply transitively) on $\mathcal{A}$.
This pair-theoretic viewpoint parallels classical and modern investigations of cyclic or pseudo-cyclic MDS behaviour at length $q+1$
\cite{BallGR,DahlPedersen,Maruta,PedersenDahl,WangDing}, 
but our perspective is that tracking the group-theoretic rigidity offers a clearer insight into the MDS property than focusing solely on geometric constraints.
Accordingly, we keep track of the chosen cyclic subgroup and ask how it refines projective classification and descent.

\medskip

While the Casse--Glynn classification settles the structure of arcs 
as  point sets, it leaves open the finer question central here:
\emph{given a regular cyclic subgroup, how does it sit inside the stabilizer, and what arithmetic constraints does it impose on
projective equivalence?}
Stabilizer phenomena for these arcs have been investigated from several perspectives  (e.g.\ \cite{CP,Hir1}),
but an explicit arithmetic rigidity mechanism for regular cyclic pairs  has not been isolated.

\medskip
We specialize throughout to the even case $q=2^m$ with $m\geq3$.
A key observation is that after extending scalars to $K=\mathbb{F}_{q^{2}}$, a Singer element of order $n$ becomes diagonalizable.
This leads to a family of diagonal cyclic orbit models in projective space: for $a\in(\mathbb{Z}/n\mathbb{Z})^{\times}$ we consider
\[
\mathcal{M}_{a}=\{[1:t:t^{a}:t^{a+1}]:t\in U_{n}\}\subset\mathrm{PG}(3,K),
\qquad U_{n}=\{u\in K^{\times}:u^{n}=1\},
\]
together with the diagonal element
\[
[S_{a}]=[\mathrm{diag}(1,\beta,\beta^{a},\beta^{a+1})]\in\mathrm{PGL}(4,K)
\]
of order $n$ acting regularly on $\mathcal{M}_{a}$.
In this diagonal setting, projective equivalence becomes arithmetic: conjugacy in $\mathrm{PGL}$ preserves the eigenvalue multiset,
and for diagonal generators this eigenvalue data is precisely the underlying exponent multiset modulo $n$ (up to an overall shift and
replacing the generator by a power).
We package this ``spectral step'' into an abstract rigidity statement (Theorem~\ref{thm:spectral-rigidity-practical}),
which will be invoked later to turn $K$-projective equivalence of cyclic models into explicit congruences on exponent data.

\medskip
Our main geometric result is an exact descent criterion for the models $\mathcal{M}_{a}$, answering the question:
for which $a$ is $\mathcal{M}_{a}$ $K$-projectively equivalent to a $(q+1)$-arc contained in the subgeometry $\mathrm{PG}(3,q)$?

\smallskip\noindent\textbf{Theorem}
(see Theorem~\ref{thm:geom-main} in Section \ref{sec:cyclic-monomial})
Let $q=2^{m}$ with $m\geq 3$ and $n=q+1$.
A cyclic monomial model $\mathcal{M}_{a}\subset\mathrm{PG}(3,\mathbb{F}_{q^{2}})$ is $K$-projectively equivalent to a $(q+1)$-arc
contained in $\mathrm{PG}(3,q)$ if and only if $a\equiv\pm 2^{e}\pmod{n}$ for some $e$ with $\gcd(e,m)=1$.

Beyond point sets, we refine the classification by keeping track of the chosen cyclic subgroup:
up to $K$-projective equivalence there are exactly $\varphi(m)/2$ classes of regular cyclic pairs $(\mathcal{A},C)$
(Corollary~\ref{cor:regular-cyclic-classification}).

\medskip
As a concrete application, we revisit the BCH family $\mathcal{C}_{(q,q+1,3,h)}$ studied by Xu et al.~\cite{Xu}.
For even $q=2^{m}$, under standard non-degeneracy hypotheses, they showed that the family exhibits a genuine dichotomy:
the code is either AMDS ($d=4$) or MDS ($d=5$), and they left open the problem of deciding which occurs in general.
We resolve this by translating the parity-check geometry of $\mathcal{C}_{(q,q+1,3,h)}$ into the monomial orbit $\mathcal{M}_{2h+1}$
over $K$ and invoking the descent criterion above, yielding an exact congruence test.

\smallskip\noindent\textbf{Theorem}
(see Theorem~\ref{thm:main-coding} in Section \ref{MDS})
Let $q=2^{m}$ with $m\geq 3$ and let $n=q+1$.
Under the standard non-degeneracy hypotheses, the BCH code $\mathcal{C}_{(q,q+1,3,h)}$ is MDS  
if and only if there exists an integer $e$ with $\gcd(e,m)=1$ such that $2h+1\equiv\pm 2^{e}\pmod{n}$.

\medskip
The proofs of the two theorems above follow a common template:
diagonalize the cyclic action over $K$, extract arithmetic constraints from eigenvalue data,
and then analyze the resulting congruences.
The abstract formulation of this spectral rigidity mechanism is given in Section~\ref{sec:portable-spectral-rigidity}.

\medskip\noindent\textbf{Organization of the paper.}
Section~\ref{sec:prelim} fixes notation, recalls the Casse--Glynn normal forms for $(q+1)$-arcs in $\mathrm{PG}(3,2^{m})$, and
records the lifted $\mathrm{PGL}(2,q)$-action and the diagonalization of Singer elements over $K=\mathbb{F}_{q^{2}}$.
Section~\ref{sec:cyclic-monomial} introduces the cyclic monomial models $\mathcal{M}_{a}$ and proves the descent criterion of
Theorem~\ref{thm:geom-main}.
Section~\ref{MDS} applies this classification to the BCH family $\mathcal{C}_{(q,q+1,3,h)}$ and derives the exact MDS congruence criterion.
Section~\ref{sec:portable-spectral-rigidity} formulates the spectral rigidity principle in a general form and briefly discusses its broader perspective.
Finally, the appendix contains deferred proofs of Lemmas~\ref{lem:PGL2-lift} and~\ref{lem:phi-conjugates-singer}.


\section{Preliminaries}\label{sec:prelim}

Throughout the paper, $q$ denotes a prime power
and $\F_q$ the finite field of order $q$.
We write $\PG(3,q)$ for the $3$-dimensional
projective space over $\F_q$, namely the set of $1$-dimensional
$\F_q$-subspaces of $\F_q^4$.  Points are written
in homogeneous coordinates
$[x_0:x_1:x_2:x_3]$ with $(x_0,x_1,x_2,x_3)\neq 0$, where
$[x_0:x_1:x_2:x_3]=
[\lambda x_0:\lambda x_1:\lambda x_2:\lambda x_3]$
for all $\lambda\in\F_q^\times$.
A (projective) hyperplane in $\PG(3,q)$ is the set of points satisfying a nontrivial homogeneous linear equation
$a_0x_0+a_1x_1+a_2x_2+a_3x_3=0$ with $(a_0,a_1,a_2,a_3)\neq 0$; in $\PG(3,q)$ we also call such a hyperplane a \emph{plane}.

A \emph{projectivity} of $\PG(3,q)$ is the map induced by an invertible linear transformation of $\F_q^4$,
i.e.\ an element of $\PGL(4,q)=\GL(4,q)/\F_q^\times$.
Two point sets $\mathcal{A},\mathcal{B}\subseteq \PG(3,q)$ are \emph{projectively equivalent over $\F_q$}
if there exists $P\in\PGL(4,q)$ such that $P(\mathcal{A})=\mathcal{B}$.
More generally, if $K/\F_q$ is a field extension, we say that $\mathcal{A}$ and $\mathcal{B}$ are
\emph{projectively equivalent over $K$} if there exists $P\in\PGL(4,K)$ sending $\mathcal{A}$ to $\mathcal{B}$.
A set $\mathcal{A}\subset \PG(3,q)$ is an \emph{$n$-arc}
if $|\mathcal{A}|=n$ and no four points of $\mathcal{A}$
lie in a common plane.  Equivalently, for any choice of nonzero column representatives
$v_P\in\F_q^4$ of $P\in\mathcal{A}$, every $4\times 4$
matrix whose columns are $v_{P_1},v_{P_2},v_{P_3},v_{P_4}$
(with $P_i$ distinct) is invertible.
A \emph{$(q+1)$-arc} is an $n$-arc with $n=q+1$.

A $4\times(q+1)$ matrix $H$ over $\F_q$ defines a point set $\{[H_1],\dots,[H_{q+1}]\}\subset\PG(3,q)$.
The associated $[q+1,q-3]$ code with parity--check matrix $H$
is MDS (equivalently $d=5$) if and only if these
points form a $(q+1)$-arc, i.e.\ every $4$ columns of $H$
are linearly independent; see \cite[\S2.4]{Huffman} or \cite[Ch.~21]{HirschfeldBook}.

From now on we specialize to the even case $q=2^m$
with $m\ge 3$ and set $n:=q+1$.
Let $K:=\F_{q^2}$ and let the set of
$(q+1)$th roots of unity in
$K$ be denoted by
\[
U_n:=U_{q+1}:=\{u\in K^\times : u^{q+1}=1\}.
\]
Then $U_{q+1}$ is cyclic of order $q+1$.
We will use cross-ratios for points on $U_{q+1}$.
For $x,y,z,w$ in a field $K$ with $x\neq w$ and $y\neq z$, the (affine) cross-ratio is
\[
\CR(x,y;z,w):=\frac{(x-z)(y-w)}{(x-w)(y-z)}.
\]
In what follows we only apply $\CR$ to pairwise distinct points.
The next lemma is a standard and convenient observation:
cross-ratios of four distinct $(q+1)$th roots of unity
actually lie in the subfield $\F_q$.

\begin{lemma}\label{lem:CR-in-Fq}
Let $q=2^m$ with
$m\geq 3$ and $n=q+1$.
If $x,y,z,w\in U_{q+1}$ are pairwise distinct and $t=\CR(x,y;z,w)$,
then $t\in \F_q\setminus\{0,1\}$.
\end{lemma}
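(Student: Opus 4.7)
The plan is to prove Lemma~\ref{lem:CR-in-Fq} by separating the two assertions: nondegeneracy ($t\notin\{0,1\}$), which is purely formal from the pairwise distinctness and characteristic two, and descent ($t\in\F_q$), which uses the defining property $u^{q}=u^{-1}$ of $U_{q+1}$ together with Frobenius invariance.

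First I would record that the numerator and denominator of $t$ are nonzero by the pairwise-distinctness hypothesis, so $t$ is a well-defined element of $K^\times$. In particular $t\neq 0$ (which would force $x=z$ or $y=w$). For $t\neq 1$, I would expand $(x-z)(y-w)=(x-w)(y-z)$ in characteristic two, where every sign is a plus. The two sides both contribute the $xy$ and $zw$ terms, and after cancellation one is left with $xw+yz=xz+yw$, equivalently $(x+y)(z+w)=0$. Pairwise distinctness yields $x\neq y$ and $z\neq w$, a contradiction.

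The main content is showing $t\in\F_q$, which I would do by verifying $t^{q}=t$, i.e.\ that $t$ is fixed by the Frobenius $\Fr_q\colon K\to K,\ \alpha\mapsto\alpha^q$. For any $u\in U_{q+1}$ one has $u^q=u^{-1}$, so
\[
t^{q}=\frac{(x^{-1}-z^{-1})(y^{-1}-w^{-1})}{(x^{-1}-w^{-1})(y^{-1}-z^{-1})}.
\]
I would then clear denominators by writing $x^{-1}-z^{-1}=(z-x)/(xz)$ and similarly for the other three differences; the products of the inverses $xyzw$ cancel completely between numerator and denominator, leaving
\[
t^{q}=\frac{(z-x)(w-y)}{(w-x)(z-y)}.
\]
In characteristic two each factor equals the corresponding factor in $t$ (since $a-b=b-a$), so $t^q=t$, and hence $t\in\F_q$.

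There is no real obstacle here: the only place requiring care is the $t\neq 1$ step, where one must factor correctly in characteristic two to obtain $(x+y)(z+w)=0$ rather than making a sign error. The descent step is a short one-line Frobenius calculation once $u^q=u^{-1}$ is recognized as the defining relation of $U_{q+1}$. Note that the hypothesis $m\ge 3$ is not actually used for this statement beyond ensuring $|U_{q+1}|\ge 9$, so that four pairwise distinct elements exist.
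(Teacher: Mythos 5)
Your proof is correct and follows essentially the same route as the paper's (one-line) argument: use $u^q=u^{-1}$ on $U_{q+1}$ to get $t^q=t$, and pairwise distinctness to rule out $t\in\{0,1\}$; you merely spell out the Frobenius computation and the factorization $(x+y)(z+w)=0$ that the paper leaves implicit. No gaps.
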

\noindent\emph{Proof.}
Using $u^q=u^{-1}$ for $u\in U_{q+1}$ we
obtain $t^q=t$, hence $t\in\F_q$; moreover $t\neq 0,1$
for distinct $x,y,z,w$. \qed

Finally, we recall the standard family of $(q+1)$-arcs in $\PG(3,q)$ that exhausts all such arcs up to projective equivalence
when $q$ is even. Fix an integer $e$ with $1\le e\le m-1$ and $\gcd(e,m)=1$, and put $\sigma:=2^e$.
Define the point set
\begin{equation}\label{eq:Ae-def}
A_{e}
:=\Bigl\{[\,1:t:t^{\sigma}:t^{\sigma+1}\,]\ :\ t\in\F_q\Bigr\}\ \cup\ \{[0:0:0:1]\}
\ \subset\ \PG(3,q).
\end{equation}
It is well known that $A_e$ is a $(q+1)$-arc; see Casse--Glynn~\cite{CasseGlynn} or \cite[Chapter~21]{HirschfeldBook}.
The following classification theorem underlies our approach.

\begin{theorem}[Casse--Glynn]
\label{thm:CasseGlynn}
Let $q=2^m$ with $m\ge 3$.
Every $(q+1)$-arc in $\PG(3,q)$ is projectively equivalent over $\F_q$ to $A_e$ for some $e$ with $\gcd(e,m)=1$.
\end{theorem}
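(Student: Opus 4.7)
The plan is to proceed by a normalization, parametrization, and rigidity argument. Using transitivity of $\PGL(4,q)$ on ordered $5$-tuples of points in general position, I would first send five chosen points of $\mathcal{A}$ to a standard frame with $[0:0:0:1]$ playing the role of a distinguished ``infinity'' point. The $(q+1)$-arc condition can then be leveraged to obtain a parametrization of $\mathcal{A}$ by $\mathbb{P}^1(\mathbb{F}_q)$ in which the finite points take the shape $[1:t:u(t):v(t)]$, $t\in\mathbb{F}_q$, for suitable functions $u,v:\mathbb{F}_q\to\mathbb{F}_q$ with normalizations $u(0)=v(0)=0$, $u(1)=v(1)=1$.

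Next, I would translate the $(q+1)$-arc condition into functional constraints on $u$ and $v$. Coplanarity of $[0:0:0:1]$ with three finite arc points reduces a $4\times 4$ determinant to a $3\times 3$ determinant in $(1,t_i,u(t_i))$, so $\{(t,u(t)):t\in\mathbb{F}_q\}$ forms a $q$-arc in $\mathrm{AG}(2,q)$; dually the graph of $v$ is also a $q$-arc. Non-coplanarity of four finite points then couples $u$ and $v$ via a genuine $4\times 4$ identity. The rigidification step uses the stabilizer of $\mathcal{A}$ in $\PGL(4,q)$, a sharply $3$-transitive copy of $\PGL(2,q)$ acting as M\"obius transformations on the parameter $t$: conjugating $u$ by appropriate translations and dilations produces functional equations which, combined with characteristic-$2$ additivity extracted from the arc condition, force $u(t)=t^{\sigma}$ for some $2$-power $\sigma=2^e$. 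Substituting back into the four-point identity then pins down $v(t)=t^{\sigma+1}$.

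The main obstacle, as I see it, is identifying the admissible exponents $e$. The necessity of $\gcd(e,m)=1$ is the cleaner half: if $d:=\gcd(e,m)>1$, choose three distinct $t_1,t_2,t_3\in\mathbb{F}_{2^d}\subset\mathbb{F}_q$, so that $t_i^{\sigma}=t_i$ and the graph points $(t_i,t_i^{\sigma})$ lie on the diagonal line; coplanarity with $[0:0:0:1]$ then contradicts the arc property. The converse is a direct Vandermonde-style computation exploiting the Frobenius-linearity of $t\mapsto t^{2^e}$ in characteristic $2$. The deeper difficulty, however, is establishing that $u$ must be a \emph{monomial} at all: the three-point (equivalently, $q$-arc) condition by itself is satisfied by many exotic o-polynomials in even characteristic, and ruling these out genuinely requires the coupling with $v$, where the stabilizer action and the hypothesis $m\geq 3$ together force the monomial shape. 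This rigidity is the essential content of the Casse--Glynn classification.
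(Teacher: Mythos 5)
This statement is not proved in the paper at all: it is the classical Casse--Glynn classification, which the paper imports as an external result (with references to Casse--Glynn and Hirschfeld) and builds on. So your task was to supply a proof of a deep quoted theorem, and your proposal, while it correctly identifies the right objects (projection from a point of the arc, the graph conditions on $u$ and $v$, the role of Frobenius monomials, and a correct argument for why $\gcd(e,m)>1$ would violate the arc condition via three points of $\F_{2^{\gcd(e,m)}}$ on the diagonal), does not close the argument.

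Two gaps are fatal. First, your ``rigidification step'' invokes the stabilizer of $\mathcal{A}$ in $\PGL(4,q)$ as a sharply $3$-transitive copy of $\PGL(2,q)$ acting by M\"obius transformations on the parameter. For an arbitrary $(q+1)$-arc this is not available a priori: the statement that the stabilizer is $\PGL(2,q)$ is itself part of the Casse--Glynn theorem (the paper explicitly records it as a consequence of that theorem), and before the classification nothing prevents the stabilizer of an unknown arc from being trivial. Using it as an input makes the argument circular. Second, the heart of the matter --- showing that the function $u$ must be a Frobenius monomial $t^{2^e}$ rather than one of the many exotic o-polynomials that satisfy the three-point (planar arc) condition in even characteristic, and that the coupling with $v$ then forces $v(t)=t^{2^e+1}$ --- is exactly the step you leave unproved; you acknowledge it as ``the essential content of the Casse--Glynn classification'' without giving the functional-equation analysis that would establish it. There are also unaddressed normalization issues (why a plane meeting the arc only at the distinguished point exists, and why the second coordinate can be taken as a bijective parameter $t\in\F_q$), but these are secondary. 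As it stands the proposal is a plausible plan whose core rigidity step and group-theoretic input are both missing; the honest alternative, and what the paper does, is to cite the original Casse--Glynn proof.
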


\begin{remark}
We will  use that the  stabilizer of $A_e$ in $\PGL(4,q)$
is isomorphic to $\PGL(2,q)$ which
acts on the parameter line in the standard way;
this lifted action is described in
Lemma \ref{lem:PGL2-lift} below.
\end{remark}

We call $M\in\PGL(2,q)$ a \emph{Singer element} if $\ord(M)=q+1$.
We will use two standard auxiliary facts: a lift of
the natural $\PGL(2,q)$-action to $\PGL(4,q)$
stabilizing $A_e$, and a diagonalization of
Singer elements over $K=\F_{q^2}$.
Proofs are deferred to Appendix~\ref{app:deferred-proofs}.

\begin{lemma}
\label{lem:PGL2-lift}
Let $q=2^m$ with $m\ge 3$, let $e$ be an integer with $\gcd(e,m)=1$, and set
$\sigma:=2^e$. Consider the $(q+1)$-arc
\[
A_{e}
=\Bigl\{P_t=[\,1:t:t^{\sigma}:t^{\sigma+1}\,]\ :\ t\in\F_q\Bigr\}\ \cup\ \{P_\infty=[0:0:0:1]\}
\ \subset\ \PG(3,q)
\]
from \eqref{eq:Ae-def}. Let
$
G_e:=\mathrm{Stab}_{\PGL(4,q)}(A_{e})
$
be its stabilizer in $\PGL(4,q)$.
\begin{enumerate}
\item Define a map $\psi:\PG(1,q)\to A_{e}$ by
\begin{equation}\label{eq:psi-def}
\psi([x:y]) := [\,x^{\sigma+1} : x^{\sigma}y : xy^{\sigma} : y^{\sigma+1}\,].
\end{equation}
Then $\psi$ is a bijection. In particular, for $t\in\F_q$,
$
\psi([1:t])=P_t
~\text{and}~
\psi([0:1])=P_\infty.
$
\item For $M=\begin{bmatrix}a&b\\ c&d\end{bmatrix}\in\mathrm{GL}(2,q)$, define
\begin{equation}\label{eq:Mabcd-def}
M_{a,b,c,d}:=
\begin{pmatrix}
a^{\sigma+1} & a^{\sigma}b & ab^{\sigma} & b^{\sigma+1}\\
a^{\sigma}c & a^{\sigma}d & b^{\sigma}c & b^{\sigma}d\\
ac^{\sigma} & bc^{\sigma} & ad^{\sigma} & bd^{\sigma}\\
c^{\sigma+1} & c^{\sigma}d & cd^{\sigma} & d^{\sigma+1}
\end{pmatrix}\in\mathrm{GL}(4,q).
\end{equation}
Let $\varphi:\PGL(2,q)\to\PGL(4,q)$ be the map sending the class of $M$
to the class of $M_{a,b,c,d}$. Then for all $\tau\in\PG(1,q)$ we have the
equivariance identity
\begin{equation}\label{eq:equivariance}
\varphi(M)\cdot \psi(\tau) \;=\; \psi(M\cdot\tau),
\end{equation}
where $M\cdot[x:y]=[ax+by:cx+dy]$ is the natural action of $\PGL(2,q)$
on $\PG(1,q)$. In particular, $\varphi(\PGL(2,q))\le G_e$.

\item The map $\varphi$ is a group monomorphism.
Moreover, since $q\geq8$,
we have $\varphi(\PGL(2,q))=G_e$ and hence $G_e\simeq\PGL(2,q)$.
\end{enumerate}
\end{lemma}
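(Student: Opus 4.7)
The plan is to handle the three parts in order, with the argument organized around the observation that $\psi$ and $\varphi$ both arise from a single twisted tensor construction; this is what makes the equivariance transparent and essentially reduces the lemma to three small observations.

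For part (1), I first check that $\psi$ is well-defined on $\PG(1,q)$: rescaling $(x,y)$ by $\lambda\in\F_q^\times$ multiplies each coordinate in \eqref{eq:psi-def} by $\lambda^{\sigma+1}$, so the homogeneous class is unchanged. Evaluating at $[1:t]$ and $[0:1]$ exhibits $\psi$ as a surjection onto $A_e$; since $|\PG(1,q)|=|A_e|=q+1$, this forces bijectivity.

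For part (2), the cleanest framework is that $\psi$ is a twisted Segre-like embedding. Identify $\F_q^4$ with $\F_q^2\otimes_{\F_q}\F_q^2$ via the basis ordering $e_1\otimes f_1,\,e_2\otimes f_1,\,e_1\otimes f_2,\,e_2\otimes f_2$ read off \eqref{eq:psi-def}; then $\psi([x:y])$ is the image of the decomposable tensor $(x,y)\otimes(x^\sigma,y^\sigma)$, where $(x^\sigma,y^\sigma)$ is the Frobenius-twisted vector. Under this identification, $M_{a,b,c,d}$ in \eqref{eq:Mabcd-def} is exactly $M\otimes M^{(\sigma)}$, where $M^{(\sigma)}$ denotes $M$ with every entry raised to the $\sigma$-th power. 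Two things are then immediate: rescaling $M$ by $\lambda$ rescales $M\otimes M^{(\sigma)}$ by $\lambda^{\sigma+1}$, and $\det(M\otimes M^{(\sigma)})=(\det M)^{2(\sigma+1)}\neq 0$, so $\varphi$ descends to a well-defined map into $\PGL(4,q)$; and by functoriality of $\otimes$ combined with additivity of Frobenius in characteristic $2$, the action of $M\otimes M^{(\sigma)}$ on $(x,y)\otimes(x^\sigma,y^\sigma)$ equals $M(x,y)\otimes(M(x,y))^\sigma$, which is a representative of $\psi(M\cdot[x:y])$. This yields \eqref{eq:equivariance} and hence $\varphi(\PGL(2,q))\le G_e$.

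For part (3), the homomorphism property of $\varphi$ is again tensor-formal: $(M_1M_2)\otimes(M_1M_2)^{(\sigma)}=(M_1\otimes M_1^{(\sigma)})(M_2\otimes M_2^{(\sigma)})$ since Frobenius is multiplicative. Injectivity follows from equivariance and the bijectivity of $\psi$: if $\varphi(M)$ is trivial in $\PGL(4,q)$, then $\psi(M\cdot\tau)=\psi(\tau)$ for all $\tau\in\PG(1,q)$, so $M$ acts trivially on $\PG(1,q)$ and must itself be trivial in $\PGL(2,q)$. The equality $\varphi(\PGL(2,q))=G_e$ then follows by comparing orders: $\varphi(\PGL(2,q))\le G_e$ with $|\varphi(\PGL(2,q))|=|\PGL(2,q)|$, while the Casse--Glynn stabilizer theorem gives $|G_e|=|\PGL(2,q)|$; the latter is valid for $q\ge 8$, which is where the hypothesis $m\ge 3$ is consumed.

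The main obstacle is conceptual rather than computational. The naive route verifies \eqref{eq:equivariance} entry by entry in the $4\times 4$ product $M_{a,b,c,d}\cdot\psi([x:y])$, which is feasible but opaque and obscures why the formula for $M_{a,b,c,d}$ has exactly that shape. Recognizing $\varphi(M)$ as $M\otimes M^{(\sigma)}$ and $\psi$ as the orbit map on a single decomposable tensor reduces well-definedness, equivariance, and the homomorphism property to one-line consequences of Frobenius additivity and multiplicativity; accordingly, the care in the proof goes into setting up this identification cleanly rather than into any single matrix calculation.
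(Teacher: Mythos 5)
Your proof is correct, and it reaches the same endpoints as the paper by a more structural route. The paper proves the equivariance \eqref{eq:equivariance} by expanding $x'^{\sigma+1}$, $x'^{\sigma}y'$, $x'y'^{\sigma}$, $y'^{\sigma+1}$ directly (using that $t\mapsto t^{\sigma}$ is additive) and then deduces the homomorphism property of $\varphi$ from equivariance together with the fact that a projectivity of $\PG(3,q)$ is determined by its action on $A_e$, since the arc contains a projective frame; you instead identify $\psi([x:y])$ with the decomposable tensor $(x,y)\otimes(x^{\sigma},y^{\sigma})$ and $M_{a,b,c,d}$ with $M\otimes M^{(\sigma)}$, after which well-definedness, equivariance, and multiplicativity of $\varphi$ become formal consequences of $(Mv)^{(\sigma)}=M^{(\sigma)}v^{(\sigma)}$ and $(M_1M_2)^{(\sigma)}=M_1^{(\sigma)}M_2^{(\sigma)}$, with no need for the frame-rigidity step. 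The two arguments are mathematically equivalent: your identification of $M_{a,b,c,d}$ with $M\otimes M^{(\sigma)}$ in the stated basis ordering is exactly the same routine computation the paper performs via its four expansions, just relocated into a single matrix comparison (and it does check out, including the scaling factor $\lambda^{\sigma+1}$ and $\det=( \det M)^{2(\sigma+1)}$). What your packaging buys is transparency about why $M_{a,b,c,d}$ has its particular shape and a shorter proof of the homomorphism property; what the paper's version buys is self-containedness at the level of explicit coordinates, which it reuses implicitly elsewhere. Parts (1) and the injectivity argument coincide with the paper's, and for the final equality $\varphi(\PGL(2,q))=G_e$ both you and the paper invoke the same classical result (the full stabilizer of $A_e$ is isomorphic to $\PGL(2,q)$ for $q\ge 8$, cf.\ Hirschfeld's Theorem~21.3.17); your order-counting deduction from that citation is legitimate, though note the paper's cited form of the theorem already states that the stabilizer is realized by the lifted action, which gives the equality directly.
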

\noindent\emph{Proof.} See Appendix~\ref{app:deferred-proofs}. \qed

In particular, $\psi$ identifies $A_e$ with $\PG(1,q)$ and $\varphi$ embeds the natural $\PGL(2,q)$-action on $\PG(1,q)$
into $\PGL(4,q)$ as a subgroup stabilizing $A_e$;
for $q\geq8$ this gives the full stabilizer $G_e$.
To pass from this lifted action to diagonal cyclic models over $K=\F_{q^2}$, we next conjugate a Singer element
 to a multiplication map on $U_{q+1}$.

\begin{lemma}
\label{lem:phi-conjugates-singer}
Let $q=2^m$ with $m\ge 3$, and let $K=\F_{q^2}$.  Let
$
U_{q+1}=\{u\in K^\times : u^{q+1}=1\}
$
be the set of $(q+1)$th roots of unity in $K$.
Let $M\in\PGL(2,q)$ be a  Singer element.
Then the following hold.

\begin{enumerate}
\item[\textup{(a)}] $M$ has no fixed point in $\PG(1,q)$, but it has exactly
two fixed points in $\PG(1,K)$; moreover these two fixed points are
Galois conjugates under Frobenius $x\mapsto x^q$.

\item[\textup{(b)}] Let $\gamma\in K\setminus \F_q$ be one fixed point of $M$
in $\PG(1,K)$, so the other fixed point is $\gamma^q$.
Define the M\"{o}bius
transformation
\[
\phi:\PG(1,K)\longrightarrow \PG(1,K),\qquad
\phi(t)=\frac{t-\gamma}{t-\gamma^q}\ \ (t\in K),\qquad \phi(\infty)=1.
\]
Then $\phi$ restricts to a bijection
\[
\phi:\PG(1,q)\ \overset{\sim}{\longrightarrow}\ U_{q+1}.
\]

\item[\textup{(c)}] There exists an element $\beta\in U_{q+1}$ of order $q+1$
such that for all $\tau\in\PG(1,q)$,
\[
\phi\bigl(M\cdot \tau\bigr)=\beta\,\phi(\tau).
\]
Equivalently, as maps on $\PG(1,q)$,
\[
\phi\circ M\circ \phi^{-1}:\ U_{q+1}\longrightarrow U_{q+1},\qquad u\longmapsto \beta u.
\]
Moreover, if $M$ is represented by a matrix
$\begin{bmatrix}a&b\\ c&d\end{bmatrix}\in\mathrm{GL}(2,q)$, then $c\neq 0$ and one may take
\[
\beta=\phi\!\left(\frac{a}{c}\right)=\frac{a-c\gamma}{a-c\gamma^q}\in U_{q+1},
\]
which indeed has order $q+1$.
\end{enumerate}
\end{lemma}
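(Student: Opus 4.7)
\emph{Part (a).} My plan is to lift $M$ to $\widetilde M\in\GL(2,q)$ and analyze the characteristic polynomial $\chi\in\F_q[X]$. Fixed points of $M$ in $\PG(1,K)$ correspond to $K$-rational eigenlines of $\widetilde M$, hence to roots of $\chi$ lying in $K$. If $\chi$ were to split over $\F_q$, then $\widetilde M$ would be $\F_q$-diagonalizable and the projective order of $M$ would divide $q-1$; but $q$ is even, so $\gcd(q-1,q+1)=1$, contradicting $\ord(M)=q+1$. Hence $\chi$ is irreducible over $\F_q$, its two roots lie in $K\setminus\F_q$ and are Galois-conjugate, and the associated eigenlines give the two fixed points $\gamma,\gamma^q\in\PG(1,K)$.

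\emph{Parts (b) and (c).} For (b), I would observe that $\phi$ is a Möbius transformation on $\PG(1,K)$ sending $(\gamma,\gamma^q,\infty)$ to $(0,\infty,1)$ and is therefore a bijection of $\PG(1,K)$. For $t\in\F_q$, applying the Frobenius $x\mapsto x^q$ to the quotient and using $t^q=t$, $\gamma^{q^2}=\gamma$ yields
\[
\phi(t)^q=\frac{t-\gamma^q}{t-\gamma}=\phi(t)^{-1},
\]
so $\phi(t)\in U_{q+1}$; together with $\phi(\infty)=1$, injectivity of $\phi$ and $|\PG(1,q)|=|U_{q+1}|=q+1$ promote this to a bijection $\PG(1,q)\to U_{q+1}$. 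For (c), conjugation carries fixed points to fixed points, so $\phi\circ M\circ\phi^{-1}$ is a Möbius transformation on $\PG(1,K)$ fixing both $0$ and $\infty$, hence of the form $u\mapsto\beta u$ for some $\beta\in K^\times$. Since $M$ stabilizes $\PG(1,q)$ and $\phi$ bijects $\PG(1,q)$ with $U_{q+1}$, multiplication by $\beta$ must stabilize $U_{q+1}$, which forces $\beta\in U_{q+1}$; conjugation-invariance of order then gives $\ord(\beta)=q+1$. For the explicit formula, note $c\neq 0$ (otherwise $M$ would fix $[1:0]\in\PG(1,q)$, contradicting (a)); evaluating $\phi(M\cdot\tau)=\beta\,\phi(\tau)$ at $\tau=\infty$ with $M\cdot\infty=a/c\in\F_q$ then yields $\beta=\phi(a/c)=(a-c\gamma)/(a-c\gamma^q)$.

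\emph{Main obstacle.} The only genuinely computational step is the Frobenius identity $\phi(t)^q=\phi(t)^{-1}$ used in (b); once that is in hand, the rest of the lemma is essentially forced by the geometry of Möbius transformations together with the uniqueness of a linear fractional map fixing two prescribed points. A small bookkeeping point worth flagging is the need to use $\gamma^{q^2}=\gamma$ (valid because $\gamma\in K=\F_{q^2}$) rather than $\gamma^q=\gamma$: conflating the two would collapse the argument, and it is precisely this asymmetry between $\F_q$ and $K$ that supplies the $(q+1)$th root of unity.
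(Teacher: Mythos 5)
Your proof is correct, and for parts (b) and (c) it follows essentially the paper's own route: the Frobenius computation $\phi(t)^q=\phi(t)^{-1}$ plus counting for the bijection, the observation that $\phi\circ M\circ\phi^{-1}$ fixes $0$ and $\infty$ and is therefore $u\mapsto\beta u$, conjugation-invariance of the order, and evaluation at $\tau=\infty$ to get $\beta=\phi(a/c)$ (the paper checks $\beta\in U_{q+1}$ by computing $\beta^q=\beta^{-1}$ from the formula, while you deduce it from $g$ stabilizing $U_{q+1}$ and $1\in U_{q+1}$; both are fine). The genuine divergence is part (a): you work with the characteristic polynomial of a lift $\widetilde M\in\GL(2,q)$ and the coprimality of $q-1$ and $q+1$, whereas the paper first rules out fixed points in $\PG(1,q)$ by an orbit--stabilizer divisibility argument ($q+1\nmid q(q-1)$) and then shows the affine fixed-point equation $ct^2+(d-a)t-b=0$ is an irreducible quadratic over $\F_q$ whose two roots in $K$ are Frobenius conjugates. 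Your eigenvalue argument is equally valid and somewhat more conceptual; the paper's version has the mild advantage that the quadratic it produces has the fixed points as its literal roots, which feeds directly into (b). One step of yours needs a small repair: a split characteristic polynomial does not imply $\F_q$-diagonalizability --- if $\chi$ has a repeated root $\lambda\in\F_q$ the lift may be conjugate to $\bigl(\begin{smallmatrix}\lambda&1\\0&\lambda\end{smallmatrix}\bigr)$, which is not diagonalizable; but in that case the projective order of $M$ is $1$ or $2$ (the characteristic), again distinct from $q+1\ge 9$, so the contradiction persists and the gap closes in one line. It is also worth recording explicitly that the irreducible quadratic (equivalently, your irreducible $\chi$) is separable because $\F_q$ is perfect, so that $\gamma\neq\gamma^q$ and there are exactly two fixed points.
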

\noindent\emph{Proof.} See Appendix~\ref{app:deferred-proofs}. \qed

We will use this multiplicative model in the next section to construct explicit diagonal-monomial orbit models in $\PG(3,K)$.


\section{Regular cyclic actions and cyclic monomial models}
\label{sec:cyclic-monomial}
By Lemma~\ref{lem:phi-conjugates-singer}(c), after identifying $\PG(1,q)$ with $U_{q+1}$ over $K=\F_{q^2}$
we may conjugate a Singer element to the multiplication action $t\mapsto \beta t$ on $U_{q+1}$.
In this section we exploit this multiplicative parametrization to introduce diagonal-monomial orbit models in $\PG(3,K)$,
which will serve as the normal form for our later equivalence and descent arguments.

\begin{definition}\label{def:monomial-arc}
Let $K=\F_{q^2}$ and $n=q+1$. Let
$
U_n=\{t\in K^\times:\ t^n=1\},
$
and fix a generator $\beta\in U_n$ (so $U_n=\langle\beta\rangle$).
For $a\in(\Z/n\Z)^\times$  we define the \emph{cyclic monomial model}
\[
\mathcal{M}_a
:=\bigl\{[\,1:t:t^a:t^{a+1}\,]:\ t\in U_n\bigr\}\subset \PG(3,K),
\]
and the associated diagonal element
\[
S_a:=\mathrm{diag}(1,\beta,\beta^a,\beta^{a+1})\in\GL(4,K).
\]
Then $[S_a]\in\PGL(4,K)$ stabilizes $\mathcal{M}_a$ and acts on the parameter by
$t\mapsto \beta t$. In particular, $[S_a]$ has order $n$ and acts regularly
 on $\mathcal{M}_a$.
\end{definition}

The next lemma identifies $A_e$ over $K$ with
the monomial model $\mathcal M_{2^e}$ and shows that,
under this identification, any Singer element in the lifted subgroup $\varphi(\PGL(2,q))\le \mathrm{Stab}(A_e)$
is sent to the corresponding diagonal element.

\begin{lemma}\label{lem:Aeq-to-monomial}
Let $q=2^m$ with $m\ge 3$ and let $e$ satisfy $\gcd(e,m)=1$.
Then the arc $A_{e}\subset \PG(3,q)$ is projectively equivalent over $\F_{q^2}$ to $\mathcal{M}_{2^e}$.
Moreover, one can choose $P\in\PGL(4,K)$ with
$P(A_e)=\mathcal M_{2^e}$ such that,
for some Singer element $M\in\PGL(2,q)$,
writing $g=\varphi(M)\in\PGL(4,q)$, we have
\[
P\,g\,P^{-1}=[S_{2^e}]\in\PGL(4,K).
\]
\end{lemma}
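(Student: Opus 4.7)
The plan is to realize $P$ as a monomial lift of the Möbius transformation $\phi$ supplied by Lemma~\ref{lem:phi-conjugates-singer}, using the very same formulas that appear in Lemma~\ref{lem:PGL2-lift}. First, I would extend $\psi$ from Lemma~\ref{lem:PGL2-lift}(1) to the map
\[
\tilde\psi:\PG(1,K)\longrightarrow\PG(3,K),\qquad [x:y]\longmapsto[x^{\sigma+1}:x^{\sigma}y:xy^{\sigma}:y^{\sigma+1}],
\]
given by the same polynomial formula over $K$. Then $\tilde\psi|_{\PG(1,q)}=\psi$ has image $A_e$, while $\tilde\psi(U_{q+1})=\mathcal M_{2^e}$ directly from Definition~\ref{def:monomial-arc}. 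Moreover, the formula for $M_{a,b,c,d}$ extends to an map $\varphi:\PGL(2,K)\to\PGL(4,K)$, and the polynomial identities underlying Lemma~\ref{lem:PGL2-lift} show that this extension is still a group homomorphism and satisfies the equivariance $\varphi(M)\cdot\tilde\psi(\tau)=\tilde\psi(M\cdot\tau)$ for all $M\in\mathrm{GL}(2,K)$ and $\tau\in\PG(1,K)$.

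Second, I would pick a matrix $\Phi\in\mathrm{GL}(2,K)$ representing $\phi$ and set $P:=\varphi(\Phi)\in\PGL(4,K)$. Applying the equivariance with $M=\Phi$ to $\tau\in\PG(1,q)$ gives $P\cdot\psi(\tau)=\tilde\psi(\phi(\tau))$. As $\tau$ ranges over $\PG(1,q)$, $\phi(\tau)$ ranges over $U_{q+1}$ by Lemma~\ref{lem:phi-conjugates-singer}(b), so $P(A_e)=\tilde\psi(U_{q+1})=\mathcal M_{2^e}$, which settles the first assertion.

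Third, let $M\in\PGL(2,q)$ be the Singer element of Lemma~\ref{lem:phi-conjugates-singer}(c), and set $g=\varphi(M)\in\PGL(4,q)$. Using the homomorphism property of $\varphi$ over $K$,
\[
P\,g\,P^{-1}=\varphi(\Phi)\,\varphi(M)\,\varphi(\Phi)^{-1}=\varphi(\Phi M\Phi^{-1}).
\]
By Lemma~\ref{lem:phi-conjugates-singer}(c), the conjugate Möbius $\phi\circ M\circ\phi^{-1}$ acts as multiplication by $\beta$ on $U_{q+1}$; since it also fixes $0=\phi(\gamma)$ and $\infty=\phi(\gamma^q)$, and since a Möbius of $\PG(1,K)$ is determined by its values on three points, it is the global dilation $t\mapsto\beta t$. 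With the convention implicit in $\psi$ (the point $[1:t]$ carries affine parameter $t$), this dilation is represented by the matrix $D:=\diag(1,\beta)\in\mathrm{GL}(2,K)$, so that $[\Phi M\Phi^{-1}]=[D]$ in $\PGL(2,K)$. Substituting $(a,b,c,d)=(1,0,0,\beta)$ into the formula \eqref{eq:Mabcd-def} yields $\varphi(D)=[\diag(1,\beta,\beta^{\sigma},\beta^{\sigma+1})]=[S_{2^e}]$, which is the required identity.

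The main obstacle is not conceptual but bookkeeping: one must match the matrix representation of a Möbius transformation to the convention used in $\psi$, since choosing the ``wrong'' affine coordinate would represent the same dilation by $\diag(\beta,1)$ and produce the reversed diagonal $(\beta^{\sigma+1},\beta^{\sigma},\beta,1)$, which equals $[S_{2^e}]^{-1}$ rather than $[S_{2^e}]$ in $\PGL(4,K)$. (Note that $M^{-1}$ is also a Singer element, so in the worst case one may simply replace $M$ by $M^{-1}$; thus the only real work lies in fixing the convention and verifying the diagonal pattern.) Beyond this, every step is a formal composition of the two auxiliary lemmas together with the fact that the Veronese-type formulas defining $\varphi$ are polynomial identities and extend to $K$ without modification.
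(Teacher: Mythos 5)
Your proposal is correct and follows essentially the same route as the paper: extend $\psi$ and the lift $\varphi$ over $K$, take $P=\varphi_K(\phi)$ with $\phi$ the M\"obius transformation from Lemma~\ref{lem:phi-conjugates-singer}, and conclude via equivariance (for $P(A_e)=\mathcal M_{2^e}$) and the homomorphism property (for the conjugation). One caution: the class $[\Phi M\Phi^{-1}]$ does not depend on any choice of affine chart --- with $\phi(t)=(t-\gamma)/(t-\gamma^q)$ and the standard identification $t\leftrightarrow[t:1]$ it equals $[\diag(\beta,1)]$, whose image under $\varphi_K$ is $[S_{2^e}]^{-1}$ rather than $[S_{2^e}]$ --- but the fallback you yourself flag (replace $M$ by $M^{-1}$, legitimate since the lemma only asks for \emph{some} Singer element) repairs this, whereas the paper instead composes $\phi$ with the inversion $[x:y]\mapsto[y:x]$ so as to land exactly on $[\diag(1,\beta)]$.
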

\begin{proof}
Put $\sigma=2^e$ and $K=\F_{q^2}$. Since $x\mapsto x^\sigma$ is a Frobenius automorphism of $K$,
the map
\[
\psi_K:\PG(1,K)\to\PG(3,K),\qquad
\psi_K([x:y])=[\,x^{\sigma+1}:x^\sigma y:xy^\sigma:y^{\sigma+1}\,]
\]
is well-defined, and the same computation as in 
Lemma~\ref{lem:PGL2-lift} yields an equivariant homomorphism
\[
\varphi_K:\PGL(2,K)\to\PGL(4,K),\qquad [M]\longmapsto [M_{a,b,c,d}],
\]
satisfying $\varphi_K(X)\,\psi_K(\tau)=\psi_K(X\cdot\tau)$ for all $X\in\PGL(2,K)$ and $\tau\in\PG(1,K)$.
In particular, $\psi_K$ restricts to $\psi$ (see Lemma~\ref{lem:PGL2-lift}) on $\PG(1,q)$, so $A_e=\psi(\PG(1,q))=\psi_K(\PG(1,q))$.
Fix a Singer element $M\in\PGL(2,q)$ and set $g:=\varphi(M)\in\PGL(4,q)\le\PGL(4,K)$.
Apply Lemma~\ref{lem:phi-conjugates-singer} to $M$:
there exist a projectivity
$\phi_0\in\PGL(2,K)$ and an element
$\beta\in U_{q+1}$ of order $q+1$ such that
\[
\phi_0(\PG(1,q))=\{[u:1]:u\in U_{q+1}\}
\quad\text{and}\quad
\phi_0\,M\,\phi_0^{-1}=[\diag(\beta,1)]\in\PGL(2,K).
\]
Let $\iota\in\PGL(2,K)$ be the inversion $\iota([x:y])=[y:x]$, and put $\phi:=\iota\circ\phi_0$.
Then
\[
\phi(\PG(1,q))=\{[1:u]:u\in U_{q+1}\}
\quad\text{and}\quad
\phi\,M\,\phi^{-1}=\iota[\diag(\beta,1)]\iota^{-1}=[\diag(1,\beta)].
\]
Now set $P:=\varphi_K(\phi)\in\PGL(4,K)$. Using equivariance,
\begin{equation*}
\begin{split}
P(A_e)&=\varphi_K(\phi)\,\psi_K(\PG(1,q))
      =\psi_K(\phi(\PG(1,q)))\\
      &=\{\psi_K([1:u]):u\in U_{q+1}\}
      =\{[\,1:u:u^\sigma:u^{\sigma+1}\,]:u\in U_{q+1}\}
      =\mathcal M_\sigma=\mathcal M_{2^e},
\end{split}
\end{equation*}
proving the first claim in the lemma.
Finally, since $\varphi_K$ is a homomorphism
and $\varphi_K(M)=\varphi(M)=g$, we have
\[
P\,g\,P^{-1}
=\varphi_K(\phi)\,\varphi_K(M)\,\varphi_K(\phi)^{-1}
=\varphi_K(\phi\,M\,\phi^{-1})
=\varphi_K([\diag(1,\beta)]).
\]
Substituting $a=1,b=0,c=0,d=\beta$ into \eqref{eq:Mabcd-def} gives
\[
\varphi_K([\diag(1,\beta)])=
[\diag(1,\beta,\beta^\sigma,\beta^{\sigma+1})]=[S_\sigma]=[S_{2^e}],
\]
as required.
\end{proof}



The next lemma is the spectral bridge: projective equivalence forces conjugacy of the corresponding regular cyclic actions, hence equality of eigenvalue multisets, which translates into an affine relation among the exponent sets modulo $q+1$.

\begin{lemma}
\label{lem:conj-implies-affine-exponents-correct}
Let $q=2^m$ with $m\ge 3$, let $n=q+1$, and let $K=\F_{q^2}$.
Fix $\beta\in K^\times$ of order $n$, and define the cyclic monomial models
$\mathcal M_a,\mathcal M_{2^e}\subset \PG(3,K)$ and diagonal elements
$S_a,S_{2^e}\in \GL(4,K)$ as in Definition~\ref{def:monomial-arc}, where
$a\in(\Z/n\Z)^\times$ and $e$ satisfies $\gcd(e,m)=1$.
Assume that there exists $P\in\PGL(4,K)$ such that
$
P(\mathcal M_a)=\mathcal M_{2^e}.
$
Then there exist $u\in(\Z/n\Z)^\times$ and $v\in \Z/n\Z$ such that
\[
\{0,1,a,a+1\}\ \equiv\ v+u\cdot\{0,1,2^e,2^e+1\}\pmod n
\]
as unordered subsets of $\Z/n\Z$.
\end{lemma}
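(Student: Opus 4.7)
The plan is to convert the projective equivalence $P$ into a spectral statement by first modifying $P$ into some $P'\in\PGL(4,K)$ for which $P'\,[S_a]\,(P')^{-1}$ is literally a power of $[S_{2^e}]$, and then reading off the claimed congruence from eigenvalue multisets of corresponding $\GL(4,K)$-lifts.

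The first task is to transport $P[S_a]P^{-1}$ back to the $A_e$ side and identify it with an element of $\varphi(\PGL(2,q))$. Using Lemma~\ref{lem:Aeq-to-monomial}, fix $P_0\in\PGL(4,K)$ and a Singer element $M\in\PGL(2,q)$ with $P_0(A_e)=\mathcal{M}_{2^e}$ and $P_0\,\varphi(M)\,P_0^{-1}=[S_{2^e}]$. Then $g:=P_0^{-1}P[S_a]P^{-1}P_0$ lies in the stabilizer of $A_e$ inside $\PGL(4,K)$. The key observation here is that any $K$-projectivity preserving the $\F_q$-arc $A_e$ setwise must actually be $\F_q$-rational: since $q+1\ge 9$ and $A_e$ is an arc, any five of its points form a frame in $\PG(3,q)$, so such a $Q$ is determined by its action on that frame, this action is a bijection between $\F_q$-frames, and hence $Q$ coincides with the unique $\F_q$-projectivity that realizes it. Combined with Lemma~\ref{lem:PGL2-lift}, this forces $g=\varphi(h)$ for some $h\in\PGL(2,q)$; and $h$ has order $n$ because $[S_a]$ does, so $h$ is a Singer element. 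Since the Singer subgroups of $\PGL(2,q)$ form a single conjugacy class, there exists $h'\in\PGL(2,q)$ with $h'h(h')^{-1}=M^u$ for some $u\in(\Z/n\Z)^\times$; setting $Q:=P_0\,\varphi(h')\,P_0^{-1}$ and $P':=QP$, a short computation using $P_0\,\varphi(M)\,P_0^{-1}=[S_{2^e}]$ and the fact that $\varphi$ is a homomorphism yields $P'[S_a](P')^{-1}=[S_{2^e}]^u$.

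Finally, I would lift this equality to $\GL(4,K)$: any choice of lift $\tilde P'$ of $P'$ gives $\tilde P'\,S_a\,\tilde P'^{\,-1}=\lambda\,S_{2^e}^u$ for some $\lambda\in K^\times$. Since conjugation preserves eigenvalues, the multiset $\{1,\beta,\beta^a,\beta^{a+1}\}$ of $S_a$ equals $\lambda\cdot\{1,\beta^u,\beta^{u\cdot 2^e},\beta^{u(2^e+1)}\}$. The left side lies in $U_n$, and the powers of $\beta$ on the right also lie in $U_n$, which forces $\lambda\in U_n$; writing $\lambda=\beta^v$ and taking discrete logarithms base $\beta$ delivers the claimed relation $\{0,1,a,a+1\}\equiv v+u\cdot\{0,1,2^e,2^e+1\}\pmod n$. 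The main obstacle is the descent assertion in the middle paragraph, namely that the $K$-stabilizer of $A_e$ coincides with its $\F_q$-stabilizer; the Singer-subgroup conjugacy in $\PGL(2,q)$ and the eigenvalue matching are routine once this rigidity is in place.
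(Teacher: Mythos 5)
Your proposal is correct and follows the same skeleton as the paper's proof: transport $P[S_a]P^{-1}$ back to the $A_e$ side via Lemma~\ref{lem:Aeq-to-monomial}, show the resulting element lies in $\varphi(\PGL(2,q))$, align it with a power of the chosen Singer element by conjugacy inside $\PGL(2,q)$, and finish by comparing eigenvalue multisets of $\GL(4,K)$-lifts, exactly as in the paper's spectral step. The one genuine variation is your proof of the descent claim $\mathrm{Stab}_{\PGL(4,K)}(A_e)=\mathrm{Stab}_{\PGL(4,q)}(A_e)$ (the paper's Claim~3.1): you invoke sharp transitivity of $\PGL(4,\cdot)$ on ordered frames --- the image of an $\F_q$-frame in $A_e$ is again an $\F_q$-frame, so the unique $K$-projectivity realizing that frame correspondence must coincide with the $\F_q$-projectivity that realizes it --- whereas the paper first shows the stabilizing projectivity commutes with Frobenius and then performs an explicit Galois descent on a matrix representative ($A^{(q)}=cA$, $c\in U_{q+1}$, rescaling by $\mu$ with $\mu^{q-1}=c$). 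Your frame argument is shorter and avoids the matrix computation, at the cost of leaning a bit harder on the fundamental theorem of projective geometry (which the paper also uses, but only for the Frobenius-commutation step). The other small difference is that you cite the single conjugacy class of Singer (order-$(q+1)$ cyclic) subgroups of $\PGL(2,q)$ as a known fact, while the paper proves it directly via the Frobenius-conjugate fixed-point pair in $\PG(1,K)\setminus\PG(1,q)$ (Claim~3.2); since that fact is classical, this is a legitimate shortcut, though in a self-contained write-up you would want to include the short argument or a reference.
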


\begin{proof}
Put $b:=2^e$.
By Definition~\ref{def:monomial-arc}, the cyclic groups $\langle [S_a]\rangle$ and
$\langle [S_b]\rangle$ have order $n$ and act regularly  
on $\mathcal M_a$ and $\mathcal M_b$, respectively.
Let
\[
g:=P\,[S_a]\,P^{-1}\in \PGL(4,K).
\]
Then $g$ stabilizes $\mathcal M_b$ and has order $n$;
moreover $g$ acts regularly on $\mathcal M_b$.
Let
$G_b:=\mathrm{Stab}_{\PGL(4,K)}(\mathcal M_b).$
We claim that there exist $Q\in G_b$ and $u\in(\Z/n\Z)^\times$ such that
\begin{equation}\label{eq:QgQ-1}
Q\,g\,Q^{-1} = [S_b]^u\qquad\text{in }\PGL(4,K).
\end{equation}
Assuming \eqref{eq:QgQ-1} for the moment, set $P_1:=Q P$.
Since $Q\in G_b$ stabilizes $\mathcal M_b$, we still have $P_1(\mathcal M_a)=\mathcal M_b$, and
we obtain an equality in $\PGL(4,K)$:
\begin{equation}\label{eq:conj-to-power-final}
P_1\,[S_a]\,P_1^{-1}=[S_b]^u.
\end{equation}
Choose the diagonal representatives in $\GL(4,K)$:
\[
\widetilde S_a=\mathrm{diag}(1,\beta,\beta^a,\beta^{a+1}),
\qquad
\widetilde S_b^u=\mathrm{diag}(1,\beta^u,\beta^{ub},\beta^{u(b+1)}).
\]
Equality \eqref{eq:conj-to-power-final} in $\PGL(4,K)$ means that there exists $\lambda\in K^\times$ such that
$
P_1\,\widetilde S_a\,P_1^{-1}=\lambda\,\widetilde S_b^u~ 
\text{in }\GL(4,K).
$
Conjugation preserves the multiset of eigenvalues, hence
\[
\{1,\beta,\beta^a,\beta^{a+1}\}
=
\{\lambda,\lambda\beta^u,\lambda\beta^{ub},\lambda\beta^{u(b+1)}\}
\]
as unordered multisets in $K^\times$.
In particular, $\lambda$ is one of the eigenvalues on the right, hence belongs to the cyclic group
$\langle\beta\rangle$ because the left-hand side lies in $\langle\beta\rangle$.
Write $\lambda=\beta^v$ for some $v\in\Z/n\Z$.
Taking exponents modulo $n$ yields
\[
\{0,1,a,a+1\}\equiv v+u\cdot\{0,1,b,b+1\}\pmod n,
\]
which is exactly the desired conclusion (recall $b=2^e$).
Thus the lemma follows once we establish \eqref{eq:QgQ-1}. We now prove \eqref{eq:QgQ-1}.
By Lemma~\ref{lem:Aeq-to-monomial}, there exists $T\in\PGL(4,K)$ such that
\begin{equation}\label{eq:T-model}
T(A_e)=\mathcal M_b
\qquad\text{and}\qquad
T\,h\,T^{-1}=[S_b]
\end{equation}
for some Singer element $h\in\PGL(4,q)$ lying in the lifted subgroup
$$\varphi(\PGL(2,q))\le \mathrm{Stab}_{\PGL(4,q)}(A_e)$$ from Lemma~\ref{lem:PGL2-lift}.
Set
\[
g_0:=T^{-1} g T\in \PGL(4,K),
\qquad
h_0:=T^{-1}[S_b]T=h\in \PGL(4,q).
\]
Then $g_0$ stabilizes $A_e$ and has order $n$; moreover, since $g$ acts regularly on $\mathcal M_b$
and $T$ is a bijection $A_e\to\mathcal M_b$, the element $g_0$ acts regularly on $A_e$.
Thus $g_0$ and $h_0$ are  Singer elements  in the stabilizer
\[
G_e:=\mathrm{Stab}_{\PGL(4,K)}(A_e).
\]

\smallskip
\noindent\textbf{Claim 3.1.}
\label{cl:stabK_equals_stabq}
We have
\[
\mathrm{Stab}_{\PGL(4,K)}(A_e)=\mathrm{Stab}_{\PGL(4,q)}(A_e).
\]

\smallskip\noindent
\smallskip\noindent
\emph{Proof of Claim 3.1.}
Let $\Fr:\PG(3,K)\to\PG(3,K)$ be the $q$-Frobenius map
$\Fr([x_0:\cdots:x_3])=[x_0^q:\cdots:x_3^q]$.
Since $A_e\subset \PG(3,q)$, every point of $A_e$ is fixed by $\Fr$.
Let $X\in\mathrm{Stab}_{\PGL(4,K)}(A_e)$. For any $P\in A_e$ we have
\[
(\Fr X \Fr^{-1})(P)=\Fr(X(\Fr^{-1}(P)))=\Fr(X(P))=X(P),
\]
because $X(P)\in A_e$ is also fixed by $\Fr$.
Hence $\Fr X \Fr^{-1}$ and $X$ agree on $A_e$.
Note that the set $A_e$ is a $(q+1)$-arc in $\PG(3,q)$.
In particular it contains a projective frame 
(indeed any five points of $A_e$ are in general position),
and a projectivity of $\PG(3,K)$ is uniquely determined by its action on a projective frame.
Therefore $\Fr X \Fr^{-1}=X$ in $\PGL(4,K)$.

Now choose a representative matrix $A\in\GL(4,K)$ for $X$.
Conjugation by $\Fr$ sends $[A]$ to $[A^{(q)}]$, where $A^{(q)}$ is obtained by raising
all entries of $A$ to the $q$th power.
Thus $\Fr X \Fr^{-1}=X$ implies $[A^{(q)}]=[A]$ in $\PGL(4,K)$, i.e.\ there exists $c\in K^\times$
such that $A^{(q)}=cA$. Applying $q$-Frobenius again yields
\[
A=A^{(q^2)}=(A^{(q)})^{(q)}=(cA)^{(q)}=c^qA^{(q)}=c^{q+1}A,
\]
hence $c^{q+1}=1$, so $c\in U_{q+1}$.
Since the map $K^\times\to U_{q+1}$, $\mu\mapsto \mu^{q-1}$ is surjective,
we may choose $\mu\in K^\times$ with $\mu^{q-1}=c$.
Set $B:=\mu^{-1}A$. Then
\[
B^{(q)}=\mu^{-q}A^{(q)}=\mu^{-q}cA=\mu^{-q}\mu^{q-1}A=\mu^{-1}A=B,
\]
so $B$ has all entries in $\F_q$, i.e.\ $B\in\GL(4,q)$.
Hence $X=[B]\in\PGL(4,q)$, proving
$$
\mathrm{Stab}_{\PGL(4,K)}(A_e)\subseteq \mathrm{Stab}_{\PGL(4,q)}(A_e).
$$
The reverse inclusion is obvious, so the stabilizers are equal.
\hfill$\square$

\smallskip
By Claim~3.1 we have
$g_0,h_0\in\mathrm{Stab}_{\PGL(4,q)}(A_e)$.
By Lemma~\ref{lem:PGL2-lift}(3), $\mathrm{Stab}_{\PGL(4,q)}(A_e)=\varphi(\PGL(2,q))$,
so $g_0,h_0\in\varphi(\PGL(2,q))$.
Hence there exist Singer elements $M_1,M_0\in\PGL(2,q)$ such that
\[
g_0=\varphi(M_1),
\qquad
h_0=\varphi(M_0).
\]

\smallskip
\smallskip\noindent\textbf{Claim 3.2.}
If $M_1,M_0\in\PGL(2,q)$ are Singer elements (order $q+1$), then there exist
$R\in\PGL(2,q)$ and $u\in(\Z/n\Z)^\times$ such that
\[
R M_1 R^{-1}=M_0^u.
\]

\smallskip\noindent
\emph{Proof of Claim 3.2.}
Let $K=\F_{q^2}$. By Lemma~\ref{lem:phi-conjugates-singer}(a),
each Singer element of $\PGL(2,q)$ has exactly two fixed points in
$\PG(1,K)\setminus\PG(1,q)$, forming a Frobenius-conjugate pair.
Let $\{\gamma_1,\gamma_1^q\}$ and $\{\gamma_0,\gamma_0^q\}$ be the fixed pairs
of $M_1$ and $M_0$, respectively.
Since $\PGL(2,q)$ acts transitively on $\PG(1,K)\setminus\PG(1,q)$,
there exists $R\in\PGL(2,q)$ with $R(\gamma_1)=\gamma_0$.
Because $R$ has coefficients in $\F_q$, it commutes with Frobenius, hence
$R(\gamma_1^q)=R(\gamma_1)^q=\gamma_0^q$.
Therefore $R M_1 R^{-1}$ and $M_0$ fix the same Frobenius pair
$\{\gamma_0,\gamma_0^q\}$, so both lie in the stabilizer of that pair.

But the stabilizer of $\{\gamma_0,\gamma_0^q\}$ in $\PGL(2,q)$ is a cyclic group
of order $q+1$ (conjugate to $U_{q+1}$ via Lemma~\ref{lem:phi-conjugates-singer}(b)--(c)).
Hence there exists $u$ with $\gcd(u,q+1)=1$ such that
$R M_1 R^{-1}=M_0^u$.
\qed

\smallskip
Apply Claim~3.2 to obtain $R\in\PGL(2,q)$ and $u\in(\Z/n\Z)^\times$ with
$R M_1 R^{-1}=M_0^u$.
Let $Q_0:=\varphi(R)\in \varphi(\PGL(2,q))\le G_e$. Then
\[
Q_0\,g_0\,Q_0^{-1}
=\varphi(R)\,\varphi(M_1)\,\varphi(R)^{-1}
=\varphi(R M_1 R^{-1})
=\varphi(M_0^u)
=\varphi(M_0)^u
=h_0^u.
\]
Conjugating by $T$ and using $h_0=T^{-1}[S_b]T$ yields
\[
(TQ_0T^{-1})\,g\,(TQ_0T^{-1})^{-1}=[S_b]^u.
\]
Thus \eqref{eq:QgQ-1} holds with $Q:=TQ_0T^{-1}\in G_b$.
This completes the proof of the lemma.
\end{proof}

We are now in position to complete the passage from projective equivalence of cyclic monomial models
to a concrete arithmetic restriction on the exponent parameter.
The previous lemma reduces projective equivalence to an affine equivalence of the exponent sets
$\{0,1,a,a+1\}$ modulo $n=q+1$.
The following short combinatorial lemma is the final ingredient: 
it shows that for four-point sets of the
special form $\{0,1,a,a+1\}$, affine equivalence forces 
$a$ to be congruent to $\pm b$ or $\pm b^{-1}$.
This will close the proof of the main geometric theorem.


\begin{lemma}\label{lem:four-point-affine}
Let $n$ be an odd integer with $n\ge 9$.
Let $b\in(\Z/n\Z)^\times$ and $a\in \Z/n\Z$.
Assume there exist $u\in(\Z/n\Z)^\times$ and $v\in\Z/n\Z$ such that
\[
\{0,1,a,a+1\} \equiv v + u\cdot\{0,1,b,b+1\}\pmod n
\]
as unordered sets. Then
\[
a \equiv \pm b \pmod n
\qquad\text{or}\qquad
a \equiv \pm b^{-1}\pmod n.
\]
\end{lemma}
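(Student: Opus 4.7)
The strategy is to convert the affine-equivalence hypothesis into an arithmetic identity via the difference (auto-correlation) multiset of $T:=\{0,1,a,a+1\}$, which is an affine invariant up to rescaling by $u$. The key structural observation is the Minkowski decomposition $T=\{0,1\}+\{0,a\}$ in $\Z/n\Z$, which factors the difference multiset via convolution (the sum being direct because $a\notin\{0,\pm 1\}$, as the sets $\{0,1,a,a+1\}$ and $\{0,1,b,b+1\}$ have four elements).

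Concretely, define $f_T(d):=\#\{(x,y)\in T\times T:x-y\equiv d\pmod{n}\}$. Then $f_T=f_{\{0,1\}}\ast f_{\{0,a\}}$, yielding $f_T(0)=4$, a contribution of $2$ at each of $d=\pm 1$ and $d=\pm a$, and a contribution of $1$ at each of $d=\pm(a-1)$ and $d=\pm(a+1)$, where coincidences among these nine canonical positions add multiplicities. The analogous identity holds for $f_S$ with $b$ in place of $a$, and the hypothesis $T=v+uS$ yields the pointwise identity $f_T(d)=f_S(u^{-1}d)$ for all $d\in\Z/n\Z$.

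In the \emph{generic} case, in which the eight nonzero canonical positions $\pm 1,\pm a,\pm(a-1),\pm(a+1)$ are pairwise distinct (equivalently $a\notin\{\pm 2,\pm 2^{-1}\}\bmod n$, these four exceptional values being pairwise distinct and disjoint from $\{0,\pm 1\}$ because $n\geq 9$), the level-$2$ set of $f_T$ is exactly $\{\pm 1,\pm a\}$. Matching against that of $f_S$ gives
\[
\{\pm 1,\pm a\}\;=\;u\cdot\{\pm 1,\pm b\}\pmod{n}.
\]
The pair $\{\pm 1\}$ on the left must coincide with either $u\{\pm 1\}$ or $u\{\pm b\}$ on the right, producing exactly two cases: $u\equiv\pm 1$, which forces $\{\pm a\}=\{\pm b\}$ and hence $a\equiv\pm b$; or $ub\equiv\pm 1$, which forces $\{\pm a\}=\{\pm b^{-1}\}$ and hence $a\equiv\pm b^{-1}$.

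In the \emph{degenerate} case, the sorted multiset of values of $f_T$ is itself an affine invariant, and the degenerate profile $(4,3,3,2,2,1,1)$ differs from the generic one $(4,2,2,2,2,1,1,1,1)$. Hence $a$ is degenerate if and only if $b$ is, forcing both into $\{\pm 2,\pm 2^{-1}\}$; since this set is the orbit of $2$ under the subgroup of $(\Z/n\Z)^\times$ generated by $-1$ and $x\mapsto x^{-1}$, any two of its elements already satisfy $a\in\{\pm b,\pm b^{-1}\}$. The main obstacle is the pair-matching enumeration in the generic case, together with verifying that the bound $n\geq 9$ cleanly separates the generic from the degenerate regime; once that is in place, the degenerate case reduces to a short finite check on the four exceptional values.
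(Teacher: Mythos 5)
Your proposal is correct, and it rests on the same core invariant as the paper's proof: the difference multiset of $\{0,1,x,x+1\}$, which an affine map $y\mapsto v+uy$ rescales by $u$, leading to the identical matching $\{\pm 1,\pm a\}=\{\pm u,\pm ub\}$ and the same two-case analysis at the end. The routes differ in how the distinguished four-element set is extracted. The paper uses the set of differences of multiplicity at least two, which equals $\{\pm 1,\pm x\}$ uniformly in $x\not\equiv 0,\pm1$: even at the exceptional values $x\in\{\pm 2,\pm 2^{-1}\}$ the coincidences only increase the multiplicity of $\pm 1$ or $\pm x$ and cannot create a new repeated difference, since $\pm(x+1)=\pm(x-1)$ is impossible for odd $n$ with $x\not\equiv 0$ (the paper's parenthetical claim that $\pm(x+1),\pm(x-1)$ are always unique is in fact slightly inaccurate exactly at those four values, but its conclusion $R_x=\{\pm1,\pm x\}$ survives). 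You instead use the level set of multiplicity exactly two, which is $\{\pm 1,\pm a\}$ only in the generic case $a\notin\{\pm 2,\pm 2^{-1}\}$, so you need two extra ingredients: the sorted value profile ($(4,2,2,2,2,1,1,1,1)$ versus $(4,3,3,2,2,1,1)$) to show that genericity of $a$ and $b$ go together, and the observation that $\{\pm 2,\pm 2^{-1}\}$ is closed under negation and inversion, which settles the degenerate case outright; both ingredients check out for odd $n\ge 9$. Your argument is somewhat longer, but it treats the exceptional coincidences explicitly (patching the small imprecision noted above), whereas the paper's multiplicity-$\ge 2$ formulation avoids the case split altogether; the convolution identity $f_T=f_{\{0,1\}}\ast f_{\{0,a\}}$ is a tidy way to organize the difference counts but is not essential to either route.
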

\begin{proof}
Let $S_x = \{0, 1, x, x+1\} \subset \Z/n\Z$.
Since $\{0,1,a,a+1\}$ and $\{0,1,b,b+1\}$ are $4$-element 
sets, we have $a, b \not\equiv 0, \pm 1 \pmod n$.
Let $D(S)$ denote the set of differences between 
distinct elements of a set $S$.
Consider the  repeated differences  in $S_x$, 
i.e. differences that occur more than once.
The pairs $\{0,1\}$ and $\{x, x+1\}$ both yield difference $\pm 1$.
The pairs $\{0,x\}$ and $\{1, x+1\}$ both yield difference $\pm x$.
The remaining pairs $\{0, x+1\}$ and $\{1, x\}$ yield $\pm(x+1)$ and $\pm(x-1)$, which are unique (since $n \ge 9$ and $x \not\equiv 0, \pm 1$).
Thus, the set of repeated differences for $S_x$ is $R_x = \{\pm 1, \pm x\}$.
The affine relation $S_a = v + u S_b$ implies that the differences in $S_a$ are exactly $u$ times the differences in $S_b$.
Consequently, the set of repeated differences must satisfy:
$
R_a = u \cdot R_b.
$
Substituting the explicit forms:
\[
\{\pm 1, \pm a\} = \{\pm u, \pm ub\}.
\]
Since $1 \in \{\pm 1, \pm a\}$, the element $1$ must appear on the right-hand side. There are two cases:
\begin{enumerate}[leftmargin=2em]
    \item \textbf{Case 1:} $1 \in \{\pm u\}$.
    Then $u \equiv \pm 1 \pmod n$.
    The set equation becomes $\{\pm 1, \pm a\} = \{\pm 1, \pm b\}$.
    Removing $\pm 1$ from both sides leaves $\{\pm a\} = \{\pm b\}$, which implies $a \equiv \pm b \pmod n$.

    \item \textbf{Case 2:} $1 \in \{\pm ub\}$.
    Then $u \equiv \pm b^{-1} \pmod n$.
    Substituting $u = \epsilon b^{-1}$ (where $\epsilon \in \{1, -1\}$) into the RHS:
    \[
    \{\pm u, \pm ub\} = \{\pm b^{-1}, \pm 1\}.
    \]
    The set equation becomes $\{\pm 1, \pm a\} = \{\pm 1, \pm b^{-1}\}$.
    Removing $\pm 1$ leaves $\{\pm a\} = \{\pm b^{-1}\}$, which implies $a \equiv \pm b^{-1} \pmod n$.
\end{enumerate}
This covers all possibilities and completes the proof.
\end{proof}


We now combine the diagonalization of $A_e$ (Lemma~\ref{lem:Aeq-to-monomial}),
the spectral rigidity for cyclic monomial models (Lemma~\ref{lem:conj-implies-affine-exponents-correct}),
and the final four-point combinatorial constraint (Lemma~\ref{lem:four-point-affine})
to obtain the main geometric classification theorem.
It characterizes exactly when a cyclic monomial model $\mathcal{M}_a\subset \PG(3,\F_{q^2})$
is $\F_{q^2}$-projectively equivalent to a $(q+1)$-arc contained in the subgeometry $\PG(3,q)$.

\begin{theorem}
\label{thm:geom-main}
Let $q=2^m$ with $m\ge 3$ and put $n=q+1$.
Let $a\in(\Z/n\Z)^\times$ and consider the cyclic monomial point set $\mathcal{M}_a\subset \PG(3,\F_{q^2})$.
The following are equivalent:
\begin{enumerate}[label=\textup{(\roman*)}, leftmargin=2.6em]
\item $\mathcal{M}_a$ is projectively equivalent (over $\F_{q^2}$) to a $(q+1)$-arc contained in $\PG(3,q)$.
\item $\mathcal{M}_a$ is projectively equivalent (over $\F_{q^2}$) to $A_{e}$ for some $e$ with $\gcd(e,m)=1$.
\item There exists $e$ with $\gcd(e,m)=1$ such that $a\equiv \pm 2^e \pmod n$.
\end{enumerate}
\end{theorem}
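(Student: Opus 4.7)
The plan is to establish the three equivalences in a short cycle $(ii)\Rightarrow(i)\Rightarrow(ii)\Rightarrow(iii)\Rightarrow(ii)$, drawing on the already-established Lemmas~\ref{lem:Aeq-to-monomial}, \ref{lem:conj-implies-affine-exponents-correct} and~\ref{lem:four-point-affine}. The implication $(ii)\Rightarrow(i)$ is immediate, since each $A_e$ lies in $\PG(3,q)$ by construction. For $(i)\Rightarrow(ii)$, if $\mathcal{M}_a$ is $K$-projectively equivalent to some $(q+1)$-arc $\mathcal{B}\subset \PG(3,q)$, then Theorem~\ref{thm:CasseGlynn} (Casse--Glynn) provides an $\F_q$-projective equivalence $\mathcal{B}\simeq A_e$ for some $e$ with $\gcd(e,m)=1$, and this in particular is a $K$-projective equivalence.

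For the main direction $(ii)\Rightarrow(iii)$, I would chain the Section~\ref{sec:cyclic-monomial} results as follows. Given $\mathcal{M}_a\simeq_K A_e$ with $\gcd(e,m)=1$, Lemma~\ref{lem:Aeq-to-monomial} gives $A_e\simeq_K\mathcal{M}_{2^e}$, hence $\mathcal{M}_a\simeq_K\mathcal{M}_{2^e}$. Lemma~\ref{lem:conj-implies-affine-exponents-correct} then produces $u\in(\Z/n\Z)^\times$ and $v\in\Z/n\Z$ with
\[
\{0,1,a,a+1\}\ \equiv\ v+u\cdot\{0,1,2^e,2^e+1\}\pmod n.
\]
Since $n=2^m+1$ is odd and $n\ge 9$, Lemma~\ref{lem:four-point-affine} applied with $b=2^e$ forces $a\equiv \pm 2^e\pmod n$ or $a\equiv \pm 2^{-e}\pmod n$. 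The second branch is reconciled with condition (iii) using the identity $2^m\equiv -1\pmod n$: since $2^{2m}\equiv 1\pmod n$, one has $2^{-e}\equiv 2^{2m-e}\equiv -2^{m-e}\pmod n$, so $\pm 2^{-e}\equiv \pm 2^{m-e}\pmod n$, and $\gcd(m-e,m)=\gcd(e,m)=1$. Either way, $a\equiv \pm 2^{e'}\pmod n$ for some $e'$ coprime to $m$, which is (iii).

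Finally, for $(iii)\Rightarrow(ii)$, I would split into two subcases. If $a\equiv 2^e\pmod n$, then $\mathcal{M}_a=\mathcal{M}_{2^e}$ already (since the exponents only matter modulo $n$), and Lemma~\ref{lem:Aeq-to-monomial} gives $\mathcal{M}_{2^e}\simeq_K A_e$. If instead $a\equiv -2^e\pmod n$, I would exhibit an explicit projectivity realizing $\mathcal{M}_{-2^e}\simeq_K \mathcal{M}_{2^e}$: multiplying the orbit representative $[1:t:t^{-2^e}:t^{1-2^e}]$ by the nonzero scalar $t^{2^e}$ yields $[\,t^{2^e}:t^{2^e+1}:1:t\,]$, which is the image of $[\,1:t:t^{2^e}:t^{2^e+1}\,]\in\mathcal{M}_{2^e}$ under the coordinate swap $[x_0:x_1:x_2:x_3]\mapsto[x_2:x_3:x_0:x_1]$, a projectivity in $\PGL(4,q)\le\PGL(4,K)$. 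Composing with Lemma~\ref{lem:Aeq-to-monomial} again closes (ii).

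The main obstacle is essentially the bookkeeping inside $(ii)\Rightarrow(iii)$: one must notice that the spurious $\pm b^{-1}$ alternative from Lemma~\ref{lem:four-point-affine} is not really spurious for $b=2^e$, because the identity $2^m\equiv-1\pmod{q+1}$ automatically reabsorbs $2^{-e}$ into the family $\pm 2^{e'}$ with $\gcd(e',m)=1$. All of the genuine geometric and arithmetic input — the $K$-diagonalization of $A_e$, the spectral-rigidity reduction to exponent multisets, and the four-point combinatorial constraint — has already been isolated into the preceding lemmas, so the theorem itself functions mainly as an assembly statement.
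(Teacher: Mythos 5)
Your proposal is correct and follows essentially the same route as the paper's proof: Casse--Glynn for (i)$\Rightarrow$(ii), the chain Lemma~\ref{lem:Aeq-to-monomial} $\to$ Lemma~\ref{lem:conj-implies-affine-exponents-correct} $\to$ Lemma~\ref{lem:four-point-affine} with the identity $2^m\equiv-1\pmod n$ absorbing the $\pm 2^{-e}$ branch for (ii)$\Rightarrow$(iii), and an explicit monomial reparameterization plus coordinate permutation for the $a\equiv-2^e$ case. The only differences are cosmetic: you close the cycle via (iii)$\Rightarrow$(ii) rather than (iii)$\Rightarrow$(i), and you use the pair-swap $[x_0:x_1:x_2:x_3]\mapsto[x_2:x_3:x_0:x_1]$ in place of the paper's $t\mapsto t^{-1}$ reparameterization with $[x_1:x_0:x_3:x_2]$, both of which are valid.
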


\begin{proof}
\textbf{(i)$\Rightarrow$(ii).}
Assume $\mathcal{M}_a$ is projectively equivalent to a $(q+1)$-arc $\mathcal{A}\subset \PG(3,q)$.
By Theorem~\ref{thm:CasseGlynn}, $\mathcal{A}$ is projectively equivalent (over $\F_q$) to some $A_{e}$ with $\gcd(e,m)=1$.
Extending scalars to $\F_{q^2}$ shows $\mathcal{M}_a$ is projectively equivalent to $A_{e}$.

\textbf{(ii)$\Rightarrow$(iii).}
Assume $\mathcal{M}_a\simeq A_{e}$ over $\F_{q^2}$.
By Lemma~\ref{lem:Aeq-to-monomial}, $A_{e}\simeq \mathcal{M}_{2^e}$ over $\F_{q^2}$.
Hence $\mathcal{M}_a\simeq \mathcal{M}_{2^e}$.
Applying Lemma~\ref{lem:conj-implies-affine-exponents-correct}
with $b=2^e$ yields an affine equivalence
\[
\{0,1,a,a+1\}\equiv v+u\cdot\{0,1,2^e,2^e+1\}\pmod n.
\]
Lemma~\ref{lem:four-point-affine} then implies
\[
a\equiv \pm 2^e \pmod n \qquad\text{or}\qquad a\equiv \pm (2^e)^{-1}\pmod n.
\]
Since $n=2^m+1$ we have $2^m\equiv -1\pmod n$ and thus $(2^e)^{-1}\equiv -2^{m-e}\pmod n$.
Because $\gcd(e,m)=1$ implies $\gcd(m-e,m)=1$, the second possibility also has the form $a\equiv \pm 2^{e'}$ with $\gcd(e',m)=1$.
This proves (iii).

\textbf{(iii)$\Rightarrow$(i).}
Assume that $a\equiv \pm 2^e \pmod n$ with $\gcd(e,m)=1$.
If $a\equiv 2^e\pmod n$, then $\mathcal{M}_a=\mathcal{M}_{2^e}$.
If $a\equiv -2^e\pmod n$, then for $t\in U_n$ we have $t^a=t^{-2^e}$.
Reparameterizing by $t\mapsto t^{-1}$ gives
\[
\mathcal{M}_a
=\bigl\{[\,1:t^{-1}:t^{2^e}:t^{2^e-1}\,]:t\in U_n\bigr\}
=\bigl\{[\,t:1:t^{2^e+1}:t^{2^e}\,]:t\in U_n\bigr\}.
\]
Applying the coordinate permutation $[x_0:x_1:x_2:x_3]\mapsto [x_1:x_0:x_3:x_2]$
sends this set to $\mathcal{M}_{2^e}$.

In either case, $\mathcal{M}_a$ is projectively equivalent over $\F_{q^2}$ to $\mathcal{M}_{2^e}$.
By Lemma~\ref{lem:Aeq-to-monomial}, $\mathcal{M}_{2^e}$ is projectively equivalent over $\F_{q^2}$
to $A_e\subset \PG(3,q)$, which is a $(q+1)$-arc. Hence $\mathcal{M}_a$ is projectively equivalent
to a $(q+1)$-arc contained in $\PG(3,q)$, proving \textup{(i)}.
\end{proof}

In the following
corollary,
we refine the geometric classification by
keeping track of a chosen regular cyclic subgroup,
leading to a normal form and an exact count of
$K$-projective equivalence classes of pairs $(\mathcal{A},C)$.

\begin{corollary}
\label{cor:regular-cyclic-classification}
Let $q=2^m$ with $m\ge 3$, put $n=q+1$, and let $K=\F_{q^2}$.
Let $\mathcal{A}\subset \PG(3,q)$ be a $(q+1)$-arc and let
$C\le \PGL(4,q)$ be a cyclic subgroup of order $n$ acting regularly on $\mathcal{A}$.
Then there exist an integer $e$ with $\gcd(e,m)=1$ and a projectivity $P\in\PGL(4,K)$ such that
\[
P(\mathcal{A})=\mathcal{M}_{2^e}
\qquad\text{and}\qquad
P C P^{-1}=\langle [S_{2^e}]\rangle,
\]
where $\mathcal{M}_{2^e}$ and $[S_{2^e}]$ are as in Definition~\ref{def:monomial-arc}.
Moreover, if also $P'(\mathcal{A})=\mathcal{M}_{2^{e'}}$ and $P' C (P')^{-1}=\langle [S_{2^{e'}}]\rangle$
for some $\gcd(e',m)=1$, then $e'\in\{e,m-e\}$.
Consequently, up to projective equivalence over $K$, the pairs $(\mathcal{A},C)$ fall into exactly
$\varphi(m)/2$ equivalence classes.
\end{corollary}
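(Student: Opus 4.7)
The plan is to reduce the pair $(\mathcal A,C)$ to a diagonal normal form using the results already in hand, then extract the classification from the spectral rigidity developed earlier. I will (i) conjugate the pair into the Casse--Glynn form $(A_{e_0},\varphi(\tilde C))$; (ii) apply Lemma~\ref{lem:Aeq-to-monomial} to a chosen generator of $C$ to diagonalize the whole pair; and (iii) invoke Lemmas~\ref{lem:conj-implies-affine-exponents-correct} and~\ref{lem:four-point-affine} together with an arithmetic analysis of powers of $2$ modulo $n$ to deduce the uniqueness and the count.

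For the existence part, Theorem~\ref{thm:CasseGlynn} provides $Q_0\in\PGL(4,q)$ and $e_0$ with $\gcd(e_0,m)=1$ such that $Q_0(\mathcal A)=A_{e_0}$. The conjugate $Q_0CQ_0^{-1}$ is cyclic of order $n$ and stabilizes $A_{e_0}$, so by Lemma~\ref{lem:PGL2-lift}(3) it lies in $\varphi(\PGL(2,q))$. Choosing a generator $g$ of $C$, we have $Q_0gQ_0^{-1}=\varphi(M)$ for some Singer $M\in\PGL(2,q)$. Applying Lemma~\ref{lem:Aeq-to-monomial} to this particular $M$ (the construction there is uniform in the chosen Singer element, depending only on its Frobenius-conjugate pair of fixed points) yields $P_1\in\PGL(4,K)$ with $P_1(A_{e_0})=\mathcal M_{2^{e_0}}$ and $P_1\varphi(M)P_1^{-1}=[S_{2^{e_0}}]$. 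Then $P:=P_1Q_0$ and $e:=e_0$ satisfy both required identities.

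For uniqueness, suppose that $(e',P')$ also works. Then $P'P^{-1}$ sends $\mathcal M_{2^e}$ to $\mathcal M_{2^{e'}}$, so Lemma~\ref{lem:conj-implies-affine-exponents-correct} (with $a=2^e$, $b=2^{e'}$) followed by Lemma~\ref{lem:four-point-affine} forces $2^e\equiv \pm 2^{\pm e'}\pmod n$. The key arithmetic input is that $2$ has order exactly $2m$ in $(\Z/n\Z)^\times$: for any $d\in\{1,\dots,m-1\}$ we have $2^d-1<n$, so $2^d\not\equiv 1\pmod n$, while $2^m\equiv -1\not\equiv 1\pmod n$; hence the order is strictly greater than $m$, and since it divides $2m$ it must equal $2m$. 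Using $-1\equiv 2^m$ and $2^{-1}\equiv 2^{2m-1}\pmod n$, the four sign/inverse cases translate to $e\equiv e',\,m+e',\,2m-e',\,m-e'\pmod{2m}$; restricting to $e,e'\in\{1,\dots,m-1\}$ leaves exactly $e'=e$ or $e'=m-e$. Since $m\ge 3$, the involution $e\mapsto m-e$ on $\{e\in\{1,\dots,m-1\}:\gcd(e,m)=1\}$ has no fixed point (a fixed point would force $2e=m$ with $\gcd(e,m)=1$, impossible), so its orbits all have size $2$, yielding $\varphi(m)/2$ classes.

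The hardest part, I expect, is this arithmetic case analysis in the uniqueness step: carefully translating ``$\pm 2^{\pm e'}$'' into congruences modulo $2m$ and eliminating the three non-trivial sign/inverse combinations requires care in distinguishing the ranges $[1,m-1]$ and $[m+1,2m-1]$. The existence argument, by contrast, is mostly a bookkeeping exercise built from the lemmas already proved in this section, the one subtle point being that Lemma~\ref{lem:Aeq-to-monomial} must be invoked for the specific Singer $M$ extracted from a generator of $C$, not an arbitrary one.
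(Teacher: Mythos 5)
Your existence and uniqueness steps are essentially the paper's own argument: Casse--Glynn plus Lemma~\ref{lem:PGL2-lift}(3) to place a generator of $C$ inside $\varphi(\PGL(2,q))$, then Lemma~\ref{lem:Aeq-to-monomial} applied to that specific Singer element, and for the ``moreover'' part Lemma~\ref{lem:conj-implies-affine-exponents-correct} followed by Lemma~\ref{lem:four-point-affine}. Your elimination of the sign/inverse cases via the observation that $2$ has order exactly $2m$ modulo $n$ (so the four alternatives become congruences on exponents modulo $2m$) is correct and is a legitimate alternative to the paper's more direct bound $0<2^{e}+2^{e'}\le 2^{m}<n$; both routes give $e'\in\{e,m-e\}$.

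The final count, however, has a genuine gap. Existence plus uniqueness only show that each pair $(\mathcal{A},C)$ determines its normal-form exponent up to the involution $e\mapsto m-e$; by itself this gives only that the number of $K$-equivalence classes lies between $\varphi(m)/2$ and $\varphi(m)$. To pin the count at exactly $\varphi(m)/2$ you must also prove that the two normal forms within one orbit are equivalent \emph{as pairs}, i.e.\ that some element of $\PGL(4,K)$ carries $(\mathcal{M}_{2^{e}},\langle[S_{2^{e}}]\rangle)$ onto $(\mathcal{M}_{2^{m-e}},\langle[S_{2^{m-e}}]\rangle)$; the uniqueness statement does not imply that a pair with normal form $e$ and a different pair with normal form $m-e$ lie in the same class. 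The paper supplies precisely this step by an explicit computation with the coordinate permutations $\Pi_{12}$ and $\Pi_{01,23}$, tracking their action both on the point sets ($\mathcal{M}_a\mapsto\mathcal{M}_{a^{-1}}$ and $\mathcal{M}_a\mapsto\mathcal{M}_{-a}$) and on the cyclic subgroups ($\langle[S_a]\rangle\mapsto\langle[S_{a^{-1}}]\rangle$ and $\langle[S_a]\rangle\mapsto\langle[S_{-a}]\rangle$), together with $-(2^{e})^{-1}\equiv 2^{m-e}\pmod n$. (One also needs, implicitly, that every orbit $\{e,m-e\}$ is realized by an actual pair over $\F_q$, e.g.\ $(A_e,\varphi(\langle M\rangle))$ for a Singer element $M$; this is immediate from your existence construction but should be said.) Without the $e\leftrightarrow m-e$ equivalence of pairs, the conclusion ``exactly $\varphi(m)/2$ classes'' does not follow from what you have proved.
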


\begin{proof}
By Theorem~\ref{thm:CasseGlynn}, there exist an integer $e$ with $\gcd(e,m)=1$
(in the standard range $1\le e\le m-1$) and $P_0\in\PGL(4,q)$ such that
$P_0(\mathcal{A})=A_e$. Conjugating the pair $(\mathcal{A},C)$ by $P_0$, we may assume
$\mathcal{A}=A_e$ and $C\le \PGL(4,q)$ still has order $n$ and acts regularly on $A_e$.
By Lemma~\ref{lem:PGL2-lift}(3),
\[
\mathrm{Stab}_{\PGL(4,q)}(A_e)=\varphi(\PGL(2,q))\simeq \PGL(2,q),
\]
so in particular $C\le \varphi(\PGL(2,q))$.
Choose a generator $g$ of $C$. Since $\varphi$ is an isomorphism onto
$\mathrm{Stab}_{\PGL(4,q)}(A_e)$, there exists $M\in\PGL(2,q)$ with
$\varphi(M)=g$.
Thus $M$ is a Singer element in $\PGL(2,q)$.
Applying Lemma~\ref{lem:Aeq-to-monomial} to this Singer element $M$, we obtain
a projectivity $P_1\in\PGL(4,K)$ such that
\[
P_1(A_e)=\mathcal{M}_{2^e}
\qquad\text{and}\qquad
P_1\,\varphi(M)\,P_1^{-1}=[S_{2^e}].
\]
Since $C=\langle g\rangle=\langle \varphi(M)\rangle$, it follows that
$
P_1 C P_1^{-1}=\langle [S_{2^e}]\rangle.
$
Undoing the initial conjugation, with $P:=P_1P_0\in\PGL(4,K)$ we have
\[
P(\mathcal{A})=\mathcal{M}_{2^e}
\qquad\text{and}\qquad
P C P^{-1}=\langle [S_{2^e}]\rangle,
\]
which proves the first assertion of the corollary.
Let $a\in(\Z/n\Z)^\times$.
Consider the coordinate permutations in $\PGL(4,K)$
\[
\Pi_{12}([x_0:x_1:x_2:x_3])=[x_0:x_2:x_1:x_3],\qquad
\Pi_{01,23}([x_0:x_1:x_2:x_3])=[x_1:x_0:x_3:x_2].
\]
\emph{(i) Action on point sets.}
For $t\in U_n$ we have
\[
\Pi_{12}\bigl([1:t:t^a:t^{a+1}]\bigr)=[1:t^a:t:t^{a+1}],
\]
and setting $s:=t^a$ (a permutation of $U_n$ since $\gcd(a,n)=1$) yields
$[1:s:s^{a^{-1}}:s^{a^{-1}+1}]$. Hence $\Pi_{12}(\mathcal{M}_a)=\mathcal{M}_{a^{-1}}$.
Likewise,
\[
\Pi_{01,23}\bigl([1:t:t^a:t^{a+1}]\bigr)=[t:1:t^{a+1}:t^a]\sim [1:t^{-1}:t^a:t^{a-1}],
\]
and with $u:=t^{-1}$ we obtain $[1:u:u^{-a}:u^{-a+1}]$, so
$\Pi_{01,23}(\mathcal{M}_a)=\mathcal{M}_{-a}$.
Therefore
\[
(\Pi_{01,23}\circ \Pi_{12})(\mathcal{M}_a)=\mathcal{M}_{-a^{-1}}.
\]

\emph{(ii) Action on the cyclic groups.}
A direct conjugation computation in $\GL(4,K)$ gives
\[
\Pi_{12} S_a \Pi_{12}^{-1}=\diag(1,\beta^a,\beta,\beta^{a+1})=S_{a^{-1}}^{\,a},
\]
hence
$\Pi_{12}\langle [S_a]\rangle\Pi_{12}^{-1}=\langle [S_{a^{-1}}]\rangle$.
Similarly,
\[
\Pi_{01,23} S_a \Pi_{01,23}^{-1}=\diag(\beta,1,\beta^{a+1},\beta^a)
\sim \diag(1,\beta^{-1},\beta^a,\beta^{a-1})=S_{-a}^{-1}
\]
in $\PGL(4,K)$, hence
$\Pi_{01,23}\langle [S_a]\rangle\Pi_{01,23}^{-1}=\langle [S_{-a}]\rangle$.
Consequently,
\[
(\Pi_{01,23}\circ\Pi_{12})\ \text{sends}\
(\mathcal{M}_a,\langle [S_a]\rangle)\ \text{to}\
(\mathcal{M}_{-a^{-1}},\langle [S_{-a^{-1}}]\rangle).
\]
Now take $a=2^e$ (so $\gcd(a,n)=1$ since $n$ is odd).
Because $n=2^m+1$ and $2^e\cdot 2^{m-e}=2^m\equiv -1\pmod n$, we have
$-a^{-1}\equiv 2^{m-e}\pmod n$.
Since $\mathcal{M}_b$ and $[S_b]$ depend only on $b\bmod n$ (because $\beta^n=1$ and $t^n=1$ on $U_n$),
we obtain an explicit equivalence over $K$:
\[
(\mathcal{M}_{2^e},\langle [S_{2^e}]\rangle)\ \simeq\
(\mathcal{M}_{2^{m-e}},\langle [S_{2^{m-e}}]\rangle).
\]
Assume also that for some $e'$ with $\gcd(e',m)=1$ (and again $1\le e'\le m-1$) there exists
$P'\in\PGL(4,K)$ such that
\[
P'(\mathcal{A})=\mathcal{M}_{2^{e'}}
\qquad\text{and}\qquad
P' C (P')^{-1}=\langle [S_{2^{e'}}]\rangle.
\]
Let $T:=P(P')^{-1}\in\PGL(4,K)$. Then
$
T(\mathcal{M}_{2^{e'}})=\mathcal{M}_{2^e}.
$
Applying Lemma~\ref{lem:conj-implies-affine-exponents-correct} with $a=2^{e'}$ and $b=2^e$
yields $u\in(\Z/n\Z)^\times$ and $v\in\Z/n\Z$ such that
\[
\{0,1,2^{e'},2^{e'}+1\}\equiv v+u\cdot\{0,1,2^e,2^e+1\}\pmod n.
\]
By Lemma~\ref{lem:four-point-affine} we deduce
\[
2^{e'}\equiv \pm 2^e \pmod n
\qquad\text{or}\qquad
2^{e'}\equiv \pm (2^e)^{-1}\pmod n.
\]
We now exclude the negative signs using $n=2^m+1$ and $1\le e,e'\le m-1$.
If $2^{e'}\equiv -2^e\pmod n$, then $n\mid (2^{e'}+2^e)$, but
\[
0<2^{e'}+2^e\le 2^{m-1}+2^{m-1}=2^m<n,
\]
a contradiction. Likewise, if $2^{e'}\equiv -2^{m-e}\pmod n$, then
$n\mid (2^{e'}+2^{m-e})$ but
\[
0<2^{e'}+2^{m-e}\le 2^{m-1}+2^{m-1}=2^m<n,
\]
again a contradiction. Finally, since $2^e\cdot 2^{m-e}=2^m\equiv -1\pmod n$, we have
$(2^e)^{-1}\equiv -2^{m-e}\pmod n$, so the second alternative is equivalent to
$2^{e'}\equiv \pm 2^{m-e}\pmod n$.
With the negative sign excluded, we conclude
\[
2^{e'}\equiv 2^e \pmod n
\qquad\text{or}\qquad
2^{e'}\equiv 2^{m-e}\pmod n.
\]
Because $0<2^{e},2^{e'},2^{m-e}<n$, these congruences are equalities in $\Z$, hence
$e'=e$ or $e'=m-e$. This proves the stated uniqueness.

There are exactly $\varphi(m)$ integers $e$
with $1\le e\le m-1$ and $\gcd(e,m)=1$.
We have shown that $e$ and $m-e$ yield equivalent pairs over $K$,
and  no further
identifications occur. The involution $e\mapsto m-e$ has no fixed points on this set
(for if $e=m-e$, then $m=2e$ and $\gcd(e,m)=1$ would force $m=2$, impossible since $m\ge 3$).
Hence each orbit has size $2$, and the number of equivalence classes is $\varphi(m)/2$.
\end{proof}

\section{An MDS criterion for the BCH family
$\mathcal{C}_{(q,q+1,3,h)}$}
\label{MDS}
We now turn to the coding-theoretic application, translating the parity--check geometry of the BCH family into the monomial models above.
In this section we apply the geometric classification
of cyclic monomial models Theorem \ref{thm:geom-main}
 to a family of BCH codes
$\mathcal{C}_{(q,q+1,3,h)}$ studied by Xu et al.~\cite{Xu}.
We briefly recall the definition in a form suited to our geometric viewpoint.

\begin{definition}
\label{def:BCH-Cqq13h}
Let $q$ be a prime power and put $n:=q+1$.
Fix a primitive $n$th root of unity $\beta\in \F_{q^2}^\times$.
For $s\in\Z$, let $g_s(x)\in\F_q[x]$ denote the minimal polynomial of $\beta^s$ over $\F_q$.
The BCH code $\mathcal{C}_{(q,q+1,3,h)}$ is the cyclic code of length $n$ over $\F_q$ with generator polynomial
\[
g(x)=\mathrm{lcm}\bigl(g_h(x),g_{h+1}(x)\bigr)\in \F_q[x],
\]
that is,
\[
\mathcal{C}_{(q,q+1,3,h)}=\langle g(x)\rangle \subseteq \F_q[x]/(x^{n}-1).
\]
Equivalently, $\mathcal{C}_{(q,q+1,3,h)}$ is the cyclic code whose defining zeros include $\beta^h$ and $\beta^{h+1}$.
\end{definition}

Xu et al.~\cite{Xu} analyzed the family $\mathcal{C}_{(q,q+1,3,h)}$ and determined the minimum distance in the cases $d=3$ and $d=4$.
They proved that for every prime power $q$ one has
\[
d(\mathcal{C}_{(q,q+1,3,h)})=3 \quad\Longleftrightarrow\quad \gcd(2h+1,q+1)>1.
\]
In particular, the regime relevant to MDS behaviour is the complementary case $\gcd(2h+1,q+1)=1$.
When $q$ is odd, Xu et al.~\cite{Xu} further showed that $\gcd(2h+1,q+1)=1$ forces $d=4$,
so the parameters of $\mathcal{C}_{(q,q+1,3,h)}$ are completely settled in the odd case and the code can never be MDS of parameters $[q+1,q-3,5]$.
The picture changes for even $q$.
Assume $q=2^m$ with $m\ge 3$ and impose the
usual non-degeneracy conditions
\[
h\notin\{0,q/2,q\},\qquad \gcd(2h+1,q+1)=1,
\]
so that $\deg g(x)=4$ and hence $\dim \mathcal{C}_{(q,q+1,3,h)}=q-3$.
Xu et al.~\cite{Xu} observed that in this even case the code must be either AMDS ($d=4$) or MDS ($d=5$),
and explicitly highlighted the remaining problem of deciding which of these two possibilities occurs.

Our approach resolves this AMDS/MDS dichotomy by linking $\mathcal{C}_{(q,q+1,3,h)}$ to the cyclic monomial models from Section~\ref{sec:cyclic-monomial}.
On the one hand, the cross-ratio criterion of~\cite{Xu} detects the AMDS case $d=4$ via a projective condition on four points of $U_{q+1}$.
On the other hand, the defining zeros $\beta^h,\beta^{h+1}$ naturally produce a monomial orbit in $\PG(3,\F_{q^2})$
(Proposition~\ref{prop:BCH-to-monomial}), so that the MDS question becomes a geometric descent problem for a cyclic monomial model,
which is decided by Theorem~\ref{thm:geom-main}.
We begin with the sufficiency direction, using the cross-ratio criterion to rule out $d=4$ when $2h+1$ is a Frobenius exponent modulo $q+1$.

\begin{lemma}\label{sufficient}
Let $q=2^m$ with $m\ge 3$ and put $n=q+1$. Let $h$ be an integer such that
\[
0\le h\le q,\qquad
h\notin\Bigl\{0,\tfrac q2,q\Bigr\},\qquad
\gcd(2h+1,n)=1.
\]
Assume that there exists an integer $e\ge 1$ with $\gcd(e,m)=1$ such that
\[
2h+1\equiv \pm 2^e \pmod n.
\]
Then the  BCH code $\mathcal{C}_{(q,q+1,3,h)}$ is an MDS code.
\end{lemma}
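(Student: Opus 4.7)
The plan is to translate the MDS property of $\mathcal{C}_{(q,q+1,3,h)}$ into a $(q+1)$-arc property for the cyclic monomial model $\mathcal{M}_{2h+1}$, and then apply Theorem~\ref{thm:geom-main} directly. Under the stated non-degeneracy hypotheses one has $\dim \mathcal{C}_{(q,q+1,3,h)} = q - 3$ as recorded in the text preceding the lemma, so the Singleton bound gives $d \le n - k + 1 = 5$; it therefore suffices to prove $d \ge 5$.

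A natural parity-check matrix $H$ over $K = \F_{q^2}$ has rows indexed by the four defining zeros $\beta^h,\beta^{h+1},\beta^{-h},\beta^{-(h+1)}$ and $t$-column $(t^h, t^{h+1}, t^{-h}, t^{-(h+1)})^{T}$ for $t \in U_n$. Scaling the $t$-column by $t^{h+1}$ identifies it with the projective point $[\, t^{2h+1} : t^{2h+2} : t : 1 \,]\in\PG(3,K)$, and a fixed coordinate permutation then sends the column point set of $H$ to $\mathcal{M}_{2h+1}$. Since $\gcd(2h+1, n) = 1$, this is a legitimate instance of Definition~\ref{def:monomial-arc}. By the standard geometric--coding dictionary, the condition $d \ge 5$ is equivalent to every four columns of $H$ being $K$-linearly independent, which is exactly the statement that $\mathcal{M}_{2h+1}$ is a $(q+1)$-arc in $\PG(3, K)$.

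Finally, I would apply Theorem~\ref{thm:geom-main} (direction (iii)$\Rightarrow$(i)): the hypothesis $2h+1 \equiv \pm 2^e \pmod n$ with $\gcd(e, m) = 1$ produces $P \in \PGL(4, K)$ such that $P(\mathcal{M}_{2h+1})$ is a $(q+1)$-arc contained in $\PG(3, q)$. Non-coplanarity of any four projective points is a non-vanishing $4 \times 4$ determinant condition that is preserved under base extension and under the $\PGL(4, K)$-action, so $\mathcal{M}_{2h+1}$ itself is a $(q+1)$-arc in $\PG(3, K)$. This yields $d \ge 5$, hence $d = 5$, and $\mathcal{C}_{(q,q+1,3,h)}$ is MDS. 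The main obstacle is purely bookkeeping: matching exponents and signs carefully when identifying the column point set of $H$ with $\mathcal{M}_{2h+1}$. Once this identification is in hand, the conclusion is an immediate combination of Theorem~\ref{thm:geom-main} with the Singleton bound.
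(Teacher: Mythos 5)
Your proposal is correct, but it follows a genuinely different route from the paper. The paper's proof of this lemma stays on the coding side: it gets $4\le d\le 5$ from \cite[Theorem~1]{Xu} together with the Singleton bound, and then excludes $d=4$ via the cross-ratio criterion of \cite[Theorem~2]{Xu}, showing that under $2h+1\equiv\pm 2^e\pmod n$ the criterion collapses to $t=t^{2^e}$ for some cross-ratio $t\in\F_q\setminus\{0,1\}$ (using Lemma~\ref{lem:CR-in-Fq}), which is impossible because the fixed field of $x\mapsto x^{2^e}$ in $\F_{2^m}$ is $\F_2$ when $\gcd(e,m)=1$. You instead identify the column point set of the natural $K$-parity-check matrix with $\mathcal{M}_{2h+1}$ (this is exactly Proposition~\ref{prop:BCH-to-monomial}, which the paper only introduces afterwards and uses for the converse direction), invoke Theorem~\ref{thm:geom-main}, direction (iii)$\Rightarrow$(i) --- which at bottom only needs Lemma~\ref{lem:Aeq-to-monomial}, the explicit permutation handling the minus sign, and the Casse--Glynn fact that $A_e$ is an arc --- and conclude that $\mathcal{M}_{2h+1}$ is a $(q+1)$-arc in $\PG(3,K)$, whence $d\ge 5$ and MDS by Singleton. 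This is logically sound and non-circular (Theorem~\ref{thm:geom-main} is established before the coding section), and it has the advantage of making both directions of Theorem~\ref{thm:main-coding} flow through the same geometric dictionary, bypassing the cross-ratio machinery of \cite{Xu} entirely; the paper's argument, in exchange, is self-contained at the level of the code and explains concretely why $d=4$ fails. Two small points to tighten: your claim that $d\ge 5$ is \emph{equivalent} to $K$-linear independence of every four columns of the $K$-matrix is more than you need and, as stated, requires a short row-space argument (the code is $\F_q$-linear, so a priori only $\F_q$-dependencies among columns produce low-weight codewords --- the paper supplies exactly this argument, $\widetilde H=AH$, in the converse direction of Theorem~\ref{thm:main-coding}); only the easy implication (all four-column subsets $K$-independent $\Rightarrow$ no codeword of weight $\le 4$) is used, and that one is immediate. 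You also rightly note, and should keep explicit, that an arc contained in $\PG(3,q)$ remains an arc viewed in $\PG(3,K)$ because four-point coplanarity is a determinant condition with entries in $\F_q$, hence independent of the ambient field.
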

\begin{proof}
Write   $k:=2h+1$. Under the assumptions
$h\notin\{0,\tfrac q2,q\}$ and $\gcd(k,n)=1$, it follows from
\cite[Theorem~1]{Xu} that the minimum distance $d$ of
$\mathcal{C}_{(q,q+1,3,h)}$ satisfies $d\ge 4$.
Moreover, since $q\equiv -1\pmod n$ and $h\neq0$,
the $q$-cyclotomic cosets of $h$ and
$h+1$ modulo $n$ are $\{h,-h\}$ and $\{h+1,-h-1\}$, each of size $2$, and
they are disjoint when
$h\notin \{0,q/2, q\}$. Hence the generator polynomial has
degree $4$ and $\dim \mathcal{C}_{(q,q+1,3,h)}=n-4=q-3$, so by the Singleton
bound \cite[Theorem~2.4.1]{Huffman} we have $d\le n-(n-4)+1=5$.
Therefore $d\in\{4,5\}$, and it suffices to show that $d\neq 4$.

Let $U_{q+1}$ be the subgroup of $\F_{q^2}^\times$ consisting of all
$(q+1)$th roots of unity, so $|U_{q+1}|=q+1$.
Fix $\beta\in\F_{q^2}$ of order $q+1$, so $U_{q+1}=\langle\beta\rangle$.
By \cite[Theorem~2]{Xu}, under $\gcd(k,n)=1$ we have $d=4$ if and only if
there exist four pairwise distinct elements $x,y,z,w\in U_{q+1}$ such that
\[
\frac{E(x,z)}{E(x,w)}=\frac{E(y,z)}{E(y,w)},
\qquad
E(a,b):=\frac{a^k-b^k}{a-b}.
\tag{$\ast$}
\]
A routine algebraic manipulation shows that, for pairwise distinct
$x,y,z,w$, the relation $(\ast)$ is equivalent to the cross-ratio identity
\begin{equation}\label{eq:CR-invariance}
\mathrm{CR}(x,y;z,w)=\mathrm{CR}(x^k,y^k;z^k,w^k),
\end{equation}
where
\[
\mathrm{CR}(x,y;z,w):=\frac{(x-z)(y-w)}{(x-w)(y-z)}.
\]
Hence $d=4$ is equivalent to the existence of pairwise distinct
$x,y,z,w\in U_{q+1}$ satisfying \eqref{eq:CR-invariance}.
Now let $x,y,z,w\in U_{q+1}$ be pairwise distinct and set
$t:=\mathrm{CR}(x,y;z,w)\in\F_{q^2}$.
By Lemma \ref{lem:CR-in-Fq}, we have
\begin{equation}\label{eq:t-not-01}
t=\mathrm{CR}(x,y;z,w)\in\F_q\setminus\{0,1\}.
\end{equation}
Assume now that $k\equiv \pm 2^e\pmod n$ with $\gcd(e,m)=1$.
For $x\in U_{q+1}$ we may reduce exponents modulo $n=q+1$, so
$x^k=x^{\pm 2^e}$. We claim that for all $x,y,z,w\in U_{q+1}$,
\[
\mathrm{CR}(x^k,y^k;z^k,w^k)=\mathrm{CR}(x,y;z,w)^{2^e}.
\]
Indeed, if $k\equiv 2^e$, then the map $a\mapsto a^{2^e}$ is the $2^e$-Frobenius
automorphism of $\F_{q^2}$, so
\[
\mathrm{CR}(x^{2^e},y^{2^e};z^{2^e},w^{2^e})
=\left(\mathrm{CR}(x,y;z,w)\right)^{2^e}.
\]
If $k\equiv -2^e$, then for $x\in U_{q+1}$ we have
$x^{-2^e}=(x^{-1})^{2^e}=(x^q)^{2^e}=(x^{2^e})^q$, hence
\[
\mathrm{CR}(x^k,y^k;z^k,w^k)
=\mathrm{CR}\bigl((x^{2^e})^q,(y^{2^e})^q;(z^{2^e})^q,(w^{2^e})^q\bigr)
=\mathrm{CR}(x^{2^e},y^{2^e};z^{2^e},w^{2^e})^q.
\]
Since $t=\mathrm{CR}(x,y;z,w)\in\F_q$,
we have $t^q=t$, and therefore again
$\mathrm{CR}(x^k,y^k;z^k,w^k)=t^{2^e}$.

Consequently, \eqref{eq:CR-invariance} is equivalent (for pairwise distinct
$x,y,z,w\in U_{q+1}$) to
\begin{equation}\label{eq:t-fixed}
t=t^{2^e},
\qquad\text{where }t=\mathrm{CR}(x,y;z,w)\in\F_q\setminus\{0,1\}.
\end{equation}
The fixed field of the Frobenius map
$t\mapsto t^{2^e}$ on $\F_{2^m}$ is $\F_{2}$ since
$\gcd(e,m)=1$.
Hence \eqref{eq:t-fixed} has no solution with $t\in\F_q\setminus\{0,1\}$.
Therefore \eqref{eq:CR-invariance} has
no solution in pairwise distinct $x,y,z,w\in U_{q+1}$, and thus $d\neq 4$.
Since $d\ge 4$, we conclude $d=5$, so $\mathcal{C}_{(q,q+1,3,h)}$ is MDS.
\end{proof}

We now connect $\mathcal{C}_{(q,q+1,3,h)}$ to a cyclic monomial model.

\begin{proposition}\label{prop:BCH-to-monomial}
Let $q=2^m$ with $m\ge 3$ and $n=q+1$. Fix $\beta\in \F_{q^2}^\times$ of order $n$.
Let $h$ be an integer and put $k=2h+1$.
Consider the $4\times n$ matrix over $\F_{q^2}$ whose $j$-th column ($j=0,\dots,n-1$) is
\[
c(\beta^j)=\begin{pmatrix}
(\beta^j)^h\\
(\beta^j)^{h+1}\\
(\beta^j)^{-h}\\
(\beta^j)^{-h-1}
\end{pmatrix}.
\]
Then, up to projective scaling of each column and a permutation of coordinates, the corresponding point set in $\PG(3,\F_{q^2})$
is exactly the cyclic monomial set $\mathcal{M}_{k}$:
\[
\bigl\{[\,1:t:t^k:t^{k+1}\,]: t\in U_{q+1}\bigr\}.
\]
Moreover, the cyclic shift $j\mapsto j+1$ acts as the diagonal Singer element $[S_k]=\mathrm{diag}(1,\beta,\beta^k,\beta^{k+1})$ in a suitable basis.
\end{proposition}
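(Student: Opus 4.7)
The plan is a direct two-step verification, entirely in routine matrix algebra: first normalize each column to the target projective form $[1:t:t^k:t^{k+1}]$, then push the cyclic shift through the same normalization. For the first claim, I would parameterize $t=\beta^j\in U_{q+1}$, so that $c(\beta^j)=(t^h,t^{h+1},t^{-h},t^{-h-1})^T$. Multiplying this representative by the nonzero scalar $t^{h+1}$ (a projective no-op) produces $(t^{2h+1},t^{2h+2},t,1)^T=(t^k,t^{k+1},t,1)^T$, since $k=2h+1$. Applying the fixed coordinate permutation $\Pi$ sending $(y_0,y_1,y_2,y_3)\mapsto(y_3,y_2,y_0,y_1)$ then yields $(1,t,t^k,t^{k+1})^T$. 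As $j$ runs over $\Z/n\Z$, the parameter $t$ runs over $U_{q+1}$, so the resulting point set is precisely $\mathcal{M}_k$.

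For the second claim, I would observe that the shift $j\mapsto j+1$ amounts to $t\mapsto\beta t$, and on the original columns it is implemented by left-multiplication by the diagonal matrix $D=\diag(\beta^h,\beta^{h+1},\beta^{-h},\beta^{-h-1})$; indeed $c(\beta^{j+1})=D\,c(\beta^j)$. Since column-wise rescalings contribute nothing in $\PGL(4,K)$, the only genuine change of basis to track is the permutation $\Pi$. A direct computation gives $\Pi D\Pi^{-1}=\diag(\beta^{-h-1},\beta^{-h},\beta^h,\beta^{h+1})$, and multiplication by $\beta^{h+1}\in K^\times$ (allowed in $\PGL$) converts this to $\diag(1,\beta,\beta^k,\beta^{k+1})=S_k$. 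Hence in the new basis the shift acts as the projectivity $[S_k]$.

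The proof is entirely mechanical, and there is no substantive obstacle. The only care needed is consistent bookkeeping: the permutation $\Pi$ must be oriented so that rows and columns are permuted in the same way, and the scalar $t^{h+1}$ must be applied uniformly so that the point set and the cyclic action are normalized simultaneously. Once these conventions are aligned, both assertions fall out by direct matrix computation, and the proposition reduces to matching the exponents $k$ and $k+1$ to the diagonal $S_k$.
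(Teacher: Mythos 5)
Your proposal is correct and follows essentially the same route as the paper: rescale each column by $t^{h+1}$, permute coordinates to reach $[1:t:t^k:t^{k+1}]$, and identify the shift with $[S_k]$. Your second step is in fact slightly more explicit than the paper's (which simply asserts the action is realized by $S_k$), since you write down $D=\diag(\beta^h,\beta^{h+1},\beta^{-h},\beta^{-h-1})$, conjugate by the permutation, and rescale by $\beta^{h+1}$ — a welcome bit of extra bookkeeping, but the same argument.
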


\begin{proof}
Let $t=\beta^j\in U_{q+1}$. The column is $c(t)=(t^h,t^{h+1},t^{-h},t^{-h-1})^T$.
Multiply this column by the nonzero scalar $t^{h+1}$ (allowed in projective space) to obtain
\[
t^{h+1}c(t)=(t^{2h+1},t^{2h+2},t,1)^T=(t^{k},t^{k+1},t,1)^T.
\]
Permuting coordinates to the order $(1,t,t^k,t^{k+1})$ yields the point
$[\,1:t:t^k:t^{k+1}\,]$.
As $t$ runs over $\langle\beta\rangle=U_{q+1}$, these points form precisely $\mathcal{M}_k$.
The action $j\mapsto j+1$ corresponds to $t\mapsto \beta t$ on the parameter.
Under the above normalization, this is realized by the diagonal matrix $S_k$ of Definition~\ref{def:monomial-arc}.
\end{proof}

We can now combine the two ingredients above.
Lemma~\ref{sufficient} gives a clean sufficient condition for the MDS case by excluding $d=4$ via the cross-ratio criterion,
while Proposition~\ref{prop:BCH-to-monomial} identifies the geometric object naturally attached to $\mathcal{C}_{(q,q+1,3,h)}$ over $K=\F_{q^2}$.
Applying the geometric classification theorem (Theorem~\ref{thm:geom-main}) to this object yields an exact MDS criterion.

\begin{theorem}
\label{thm:main-coding}
Let $q=2^m$ with $m\ge 3$ and let $n=q+1$. Let $h$ be an integer with
\[
0\le h\le q,\qquad h\notin\Bigl\{0,\tfrac q2,q\Bigr\},\qquad \gcd(2h+1,n)=1.
\]
Then the BCH code $\mathcal{C}_{(q,q+1,3,h)}$ is MDS  if and only if
there exists an integer $e$ with $\gcd(e,m)=1$ such that
\[
2h+1 \equiv \pm 2^e \pmod n.
\]
\end{theorem}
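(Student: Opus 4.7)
The plan is to prove both directions of the biconditional through a single geometric reduction: translate MDS into a $K$-projective-equivalence question about $\mathcal{M}_{2h+1}$, and then invoke Theorem~\ref{thm:geom-main}. Sufficiency is moreover already packaged independently in Lemma~\ref{sufficient}, whose cross-ratio computation will serve as a useful sanity check.

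First I would recast MDS as an arc condition in $\PG(3,q)$. Under the standing non-degeneracy the generator polynomial has degree $4$ and $\dim\mathcal{C}_{(q,q+1,3,h)}=n-4$, so by Singleton one has $d\le 5$; hence MDS is equivalent to the column set $\mathcal{A}$ of a $4\times n$ parity-check matrix $H_{\F_q}$ over $\F_q$ forming a $(q+1)$-arc in $\PG(3,q)$.

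The crucial step is a Galois-descent identification of $\mathcal{A}$ with $\mathcal{M}_{2h+1}$ up to $K$-projectivity. Let $H$ be the $4\times n$ matrix over $K=\F_{q^2}$ of Proposition~\ref{prop:BCH-to-monomial}, whose rows come from the defining zeros $\beta^{\pm h},\beta^{\pm(h+1)}$. The hypotheses $\gcd(2h+1,n)=1$ and $h\notin\{0,q/2,q\}$ together force the four exponents $\pm h,\pm(h+1)$ to be pairwise distinct modulo $n$, so the rows of $H$ are distinct characters of $\Z/n\Z$ and $H$ has $K$-rank $4$. Both $H$ and $H_{\F_q}$ annihilate $C\subset\F_q^n$; scalar extension and a dimension count give
\[
\ker_K H = \ker_K H_{\F_q} = C\otimes_{\F_q}K,
\]
and consequently $H$ and $H_{\F_q}$, having the same right kernel and both of $K$-rank $4$, share a common $K$-row space. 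They therefore differ by an invertible element of $\GL(4,K)$ acting on the left, so $\mathcal{A}\subset\PG(3,q)\subset\PG(3,K)$ is $K$-projectively equivalent to the column point set of $H$, which equals $\mathcal{M}_{2h+1}$ after the column rescaling $c(t)\mapsto t^{h+1}c(t)$ and the coordinate permutation of Proposition~\ref{prop:BCH-to-monomial} (both $K$-projectivities). Since $\mathcal{A}\subset\PG(3,q)$, the $K$-arc property of $\mathcal{A}$ in $\PG(3,K)$ coincides with its $\F_q$-arc property in $\PG(3,q)$, because $4\times 4$ $\F_q$-minors vanish over $\F_q$ iff they vanish over $K$.

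Combining these observations, MDS is equivalent to $\mathcal{M}_{2h+1}$ being $K$-projectively equivalent to a $(q+1)$-arc contained in $\PG(3,q)$, and Theorem~\ref{thm:geom-main} applied with $a=2h+1$ turns this into exactly the desired congruence $2h+1\equiv\pm 2^e\pmod n$ for some $e$ with $\gcd(e,m)=1$. The deep work sits in Theorem~\ref{thm:geom-main} (spectral rigidity plus the four-point combinatorial lemma); the remaining obstacle here is the bookkeeping of the Galois-descent step, specifically justifying the rank-4 claim for $H$ under the non-degeneracy hypotheses and verifying that the column rescaling and coordinate permutation used to normalize $H$ into the monomial model $\mathcal{M}_{2h+1}$ act by genuine projectivities over $K$, so that no arc information is lost in the identification $\mathcal{A}\simeq_K\mathcal{M}_{2h+1}$.
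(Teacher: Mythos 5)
Your proposal is correct, and its ``only if'' half is essentially the paper's own argument: Proposition~\ref{prop:BCH-to-monomial} produces the $K$-parity-check matrix whose column set is $\mathcal{M}_{2h+1}$, the common-row-space (kernel/Galois-descent) comparison with a full-rank $\F_q$-parity-check matrix gives the $K$-projective equivalence with a point set inside $\PG(3,q)$, and Theorem~\ref{thm:geom-main} then delivers the congruence. Where you genuinely differ is the ``if'' direction: the paper proves sufficiency independently in Lemma~\ref{sufficient}, via Xu et al.'s cross-ratio criterion for $d=4$, Lemma~\ref{lem:CR-in-Fq}, and the fact that the fixed field of $t\mapsto t^{2^e}$ on $\F_{2^m}$ is $\F_2$ when $\gcd(e,m)=1$; you instead run the same parity-check identification in reverse, using Theorem~\ref{thm:geom-main}(iii)$\Rightarrow$(i) to make $\mathcal{M}_{2h+1}$ $K$-equivalent to $A_e$ and transporting the arc property back to the $\F_q$-columns via the remark that $4\times 4$ minors of $\F_q$-vectors vanish over $\F_q$ iff they vanish over $K$. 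Your route is more uniform (one geometric equivalence handles both directions), at the price of invoking the full Casse--Glynn/spectral machinery even for sufficiency, whereas the paper's Lemma~\ref{sufficient} keeps that direction elementary and independent of the classification theorem. The two points your argument genuinely needs---the rank-$4$ claim for $\widetilde H$ (pairwise distinctness of $\pm h,\pm(h+1)$ modulo $n$ under the non-degeneracy hypotheses, which also guarantees $\dim\mathcal{C}=q-3$ and the unconditional identification $\mathcal{A}\simeq_K\mathcal{M}_{2h+1}$) and the preservation of the arc property under $K$-projectivities and under field extension for $\F_q$-rational point sets---are correctly identified and justified, so I see no gap.
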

\begin{proof}
The ``if'' direction is Lemma~\ref{sufficient}.
For the ``only if'' direction, assume $\mathcal{C}_{(q,q+1,3,h)}$ is MDS.
Then $d=5$ and $\dim \mathcal{C}=q-3$, so
 there exists a parity--check matrix
$H\in\F_q^{4\times (q+1)}$ whose
column point set $\mathcal{P}(H)\subset \PG(3,q)$ is a $(q+1)$-arc.
On the other hand, Proposition~\ref{prop:BCH-to-monomial} provides
a $4\times (q+1)$ parity--check matrix
$\widetilde H$ over $K=\F_{q^2}$ for the same
code whose column point set in $\PG(3,K)$ is (up to projective scaling
and a permutation of coordinates) exactly the cyclic monomial model
$\mathcal{M}_k$ with $k=2h+1$.
Since $H$ and $\widetilde H$ are both full-rank parity--check matrices for $\mathcal{C}_{(q,q+1,3,h)}$,
their row spaces over $K$ coincide, hence there exists $A\in\GL(4,K)$ such that $\widetilde H=AH$.
Therefore $\mathcal{M}_k$ is $K$-projectively equivalent to the $(q+1)$-arc $\mathcal{P}(H)\subset \PG(3,q)$.

Applying Theorem~\ref{thm:geom-main} to $\mathcal{M}_k$ yields
$k\equiv \pm 2^e\pmod{q+1}$ for some $e$ with $\gcd(e,m)=1$.
Equivalently, $2h+1\equiv \pm 2^e\pmod{q+1}$.
\end{proof}


Theorem~\ref{thm:main-coding} shows that the MDS condition is extremely restrictive on $h$ modulo $q+1$,
and in particular it allows only finitely many congruence classes (depending only on $m$).
The following corollary makes this explicit.
\begin{corollary}\label{cor:count}
Let $q=2^m$ with $m\ge 3$ and $n=q+1$. The set of congruence classes $h\in\Z/n\Z$ for which
$\mathcal{C}_{(q,q+1,3,h)}$ is MDS (under the standing hypotheses of Theorem~\ref{thm:main-coding})
is contained in
\[
\left\{\frac{\pm 2^e-1}{2}\bmod n : 1\le e\le m-1,\ \gcd(e,m)=1\right\}.
\]
In particular, there are at most $2\varphi(m)$ such congruence classes.
\end{corollary}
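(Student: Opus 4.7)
The plan is to read the corollary off immediately from Theorem~\ref{thm:main-coding}; the work reduces to solving a linear congruence and counting, with no serious obstacle to overcome.

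First I would note that $n=2^m+1$ is odd, so $2$ is a unit in $\Z/n\Z$ (with explicit inverse $2^{-1}\equiv 2^{m-1}+1 \pmod n$, since $2(2^{m-1}+1)=2^m+2\equiv 1\pmod n$). By Theorem~\ref{thm:main-coding}, under the standing hypotheses the code $\mathcal{C}_{(q,q+1,3,h)}$ is MDS if and only if $2h+1\equiv \pm 2^e\pmod n$ for some integer $e$ with $\gcd(e,m)=1$. Solving for $h$ yields $h\equiv (\pm 2^e-1)\cdot 2^{-1}\pmod n$, which is precisely the expression $(\pm 2^e-1)/2 \bmod n$ appearing in the corollary; this establishes the inclusion once the range $1\le e\le m-1$ is justified.

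For that range restriction, I would use that $2^m\equiv -1\pmod n$, so $2^{2m}\equiv 1$ and the multiplicative order of $2$ modulo $n$ is exactly $2m$. Hence any integer exponent $e'$ with $\gcd(e',m)=1$ can be reduced modulo $m$ to a representative $e\in\{1,\dots,m-1\}$ with $\gcd(e,m)=1$ (the case $e\equiv 0\pmod m$ is excluded because $\gcd(0,m)=m>1$), at the cost of possibly flipping the sign in front of $2^e$. Thus the set of residues $\{\pm 2^e\bmod n : \gcd(e,m)=1\}$ is unchanged by restricting to the stated range, and the inclusion in the corollary follows verbatim.

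For the numerical bound, there are exactly $\varphi(m)$ admissible values of $e\in\{1,\dots,m-1\}$, each contributing at most two residue classes (one per sign choice), so the candidate set has cardinality at most $2\varphi(m)$. I would not attempt to sharpen this here: collisions among the $2\varphi(m)$ candidates may in principle occur (two different pairs $(e,\pm)$ could produce the same residue of $2h+1$), but they only decrease the true count, so the asserted upper bound holds unconditionally. The only step requiring any verification beyond symbol pushing is the range reduction above, and even that is a one-line consequence of $2^m\equiv -1\pmod n$.
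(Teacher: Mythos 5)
Your proposal is correct and follows essentially the same route as the paper, which simply notes the corollary is immediate from Theorem~\ref{thm:main-coding} and counts the $2\varphi(m)$ sign--exponent choices. You merely fill in the routine details (the explicit inverse $2^{-1}\equiv 2^{m-1}+1\pmod n$ and the reduction of the exponent to the range $1\le e\le m-1$ via $2^m\equiv -1\pmod n$), both of which are fine.
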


\begin{proof}
This is immediate from Theorem~\ref{thm:main-coding}.
The bound $2\varphi(m)$ counts the choices of $e$ coprime to $m$ together with the two signs; different
choices may coincide modulo $n$ for special $m$.
\end{proof}

For completeness, we spell out the criterion in a couple of small cases to illustrate the congruence condition
$2h+1\equiv \pm 2^e\pmod{q+1}$ and the symmetry $e\leftrightarrow m-e$.

\medskip
\begin{example}[$q=8$]
Here $m=3$, $n=9$, and $\gcd(e,3)=1$ gives $e\in\{1,2\}$.
The allowed values of $k=2h+1$ are
\[
k\equiv \pm 2,\ \pm 4 \pmod 9,
\]
so $k\in\{2,4,5,7\}$.
Since $2^{-1}\equiv 5\pmod 9$, we obtain
\[
h\equiv \frac{k-1}{2}\equiv 5(k-1)\pmod 9,
\]
yielding $h\in\{2,3,5,6\}$ modulo $9$ (excluding $h\in\{0,4,8\}$ as prescribed).
\end{example}

\medskip
\begin{example}[$q=16$]
Here $m=4$, $n=17$, and $\gcd(e,4)=1$ gives $e\in\{1,3\}$.
Thus $k\equiv \pm 2,\ \pm 8\pmod{17}$.
Since $2^{-1}\equiv 9\pmod{17}$, $h\equiv 9(k-1)\pmod{17}$.
\end{example}

\medskip
\begin{remark}
Because $n=q+1$ is odd, the map $k\mapsto -k$ corresponds to $h\mapsto (-k-1)/2$ and hence to a simple involution on $h$.
Also, $k^{-1}\equiv -2^{m-e}$ when $k=2^e$, reflecting the symmetry $e\leftrightarrow m-e$ in the geometric classification.
\end{remark}

\section{Spectral rigidity: a general principle and brief perspectives}
\label{sec:portable-spectral-rigidity}

The arguments in Sections~\ref{sec:cyclic-monomial}--\ref{MDS} illustrate a simple phenomenon:
once a regular cyclic action becomes diagonal over
a suitable extension field,
$K$-projective equivalence of the resulting
cyclic pairs is controlled by the eigenvalue data,
equivalently by exponent multisets modulo $n$.
We isolate this as a general rigidity principle
for diagonal regular cyclic pairs in $\PG(r,K)$,
and briefly indicate how it packages the mechanism used earlier.

\begin{definition}\label{def:diag-orbit}
Let $K$ be a field containing a primitive $n$th root of unity $\beta$, and put
$U_n=\langle\beta\rangle\subset K^\times$.
Fix $r\ge 1$ and a multiset $E=\{e_0,\ldots,e_r\}\subset \Z/n\Z$ of size $r+1$.
We define the diagonal orbit
\[
\mathcal M_E:=\bigl\{[t^{e_0}:\cdots:t^{e_r}]:\ t\in U_n\bigr\}\subset \PG(r,K),
\]
and the diagonal element
\[
S_E:=\diag(\beta^{e_0},\ldots,\beta^{e_r})\in \GL(r+1,K).
\]
We say that $E$ is \emph{regular} if $\gcd\bigl(n,\{e_i-e_0:\ 1\le i\le r\}\bigr)=1$.
Equivalently, the induced action of $[S_E]\in\PGL(r+1,K)$ on $\mathcal M_E$ is regular,
so $|\mathcal M_E|=n$.
In that case we call $(\mathcal M_E,[S_E])$ a \emph{diagonal regular cyclic orbit}.
\end{definition}

\begin{theorem}\label{thm:spectral-rigidity-practical}
Let $(\mathcal M_E,[S_E])$ and $(\mathcal M_F,[S_F])$ be diagonal regular cyclic orbits in $\PG(r,K)$.
Assume there exists $P\in\PGL(r+1,K)$ such that
\[
P(\mathcal M_E)=\mathcal M_F
\quad\text{and}\quad
P[S_E]P^{-1}\in \langle [S_F]\rangle .
\]
Then there exist $u\in(\Z/n\Z)^\times$ and $v\in\Z/n\Z$ such that
\[
E \equiv v+uF \pmod n
\]
as unordered multisets in $\Z/n\Z$.
\end{theorem}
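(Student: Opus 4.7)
The plan is to isolate and abstract the spectral extraction step that appeared inside the proof of Lemma~\ref{lem:conj-implies-affine-exponents-correct}. In that lemma, most of the work was spent producing an auxiliary conjugation bringing $P[S_a]P^{-1}$ into the cyclic subgroup $\langle[S_b]\rangle$; once that reduction was in hand, the passage from matrix conjugation to exponent multisets was essentially immediate. In the present abstract setting, that auxiliary conjugation is granted by hypothesis, so only the eigenvalue step itself needs to be carried out.

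First I would rewrite the containment $P[S_E]P^{-1}\in\langle[S_F]\rangle$ as an explicit equation $P[S_E]P^{-1}=[S_F]^u$ in $\PGL(r+1,K)$ for some integer $u$. The regularity assumption on $E$ forces $[S_E]$ to have order exactly $n$; since conjugation preserves order, $[S_F]^u$ also has order $n$, which combined with $\ord([S_F])=n$ forces $\gcd(u,n)=1$. Next I would lift to $\GL(r+1,K)$: choose the standard diagonal representatives
\[
\widetilde S_E=\diag(\beta^{e_0},\ldots,\beta^{e_r}),\qquad
\widetilde S_F^{\,u}=\diag(\beta^{uf_0},\ldots,\beta^{uf_r}),
\]
and note that the projective equality translates into a matrix equality $P\widetilde S_E P^{-1}=\lambda\,\widetilde S_F^{\,u}$ for a unique $\lambda\in K^\times$.

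From here the argument is the clean eigenvalue comparison. Matrix conjugation preserves the multiset of eigenvalues, so
\[
\{\beta^{e_0},\ldots,\beta^{e_r}\}=\{\lambda\beta^{uf_0},\ldots,\lambda\beta^{uf_r}\}
\]
as unordered multisets in $K^\times$. The entries on the left lie in the cyclic group $U_n=\langle\beta\rangle$, while each factor $\beta^{uf_i}$ on the right also lies in $U_n$; comparing any matched pair then pins down $\lambda\in U_n$, so I can write $\lambda=\beta^v$ for some $v\in\Z/n\Z$. Reading off exponents modulo $n$ gives exactly $E\equiv v+uF\pmod n$ as unordered multisets, with $u\in(\Z/n\Z)^\times$ as required.

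No serious obstacle arises: the statement is a clean abstraction of the concrete $\PG(3,K)$ calculation used earlier. The only subtlety worth flagging is the minor asymmetry between the two hypotheses. The orbit equality $P(\mathcal M_E)=\mathcal M_F$ ensures that $P$ intertwines the two cyclic actions as projective objects, but the arithmetic conclusion is driven entirely by the conjugation hypothesis together with diagonal eigenvalue rigidity. If one wished to weaken the hypotheses to $P(\mathcal M_E)=\mathcal M_F$ alone, the nontrivial work would be to reconstruct $P[S_E]P^{-1}\in\langle[S_F]\rangle$ by a stabilizer analysis of $\mathcal M_F$—which is precisely what the Singer-conjugacy argument in Lemma~\ref{lem:conj-implies-affine-exponents-correct} accomplishes in the geometric setting of $\PG(3,K)$.
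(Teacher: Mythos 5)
Your proposal is correct and follows essentially the same route as the paper's proof: upgrade the containment to $P[S_E]P^{-1}=[S_F]^u$ with $\gcd(u,n)=1$, lift to a matrix identity $A\widetilde S_E A^{-1}=\lambda\widetilde S_F^{\,u}$, compare eigenvalue multisets, deduce $\lambda=\beta^v$, and read off exponents modulo $n$. Your closing remark about the orbit hypothesis versus the alignment hypothesis matches the paper's own Remark~\ref{rem:alignment}.
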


\begin{proof}
Since $P[S_E]P^{-1}\in \langle [S_F]\rangle$ and both actions are regular of order $n$,
there exists $u\in(\Z/n\Z)^\times$ such that
\[
P[S_E]P^{-1}=[S_F]^u \qquad \text{in }\PGL(r+1,K).
\]
Choose $A\in\GL(r+1,K)$ with $[A]=P$. Then there exists $\lambda\in K^\times$ with
\[
A S_E A^{-1}=\lambda\, S_F^u \qquad \text{in }\GL(r+1,K).
\]
Comparing eigenvalue multisets gives
\[
\{\beta^{e_0},\ldots,\beta^{e_r}\}=\{\lambda\beta^{uf_0},\ldots,\lambda\beta^{uf_r}\}.
\]
As the left-hand side lies in $\langle\beta\rangle$, we have $\lambda\in\langle\beta\rangle$,
so $\lambda=\beta^v$ for some $v\in\Z/n\Z$.
Injectivity of $x\mapsto \beta^x$ on $\Z/n\Z$ yields
\[
\{e_0,\ldots,e_r\}\equiv \{v+uf_0,\ldots,v+uf_r\}\pmod n,
\]
i.e.\ $E\equiv v+uF$.
\end{proof}

\begin{remark}\label{rem:alignment}
The content of Theorem~\ref{thm:spectral-rigidity-practical} is the \emph{spectral step}:
once the cyclic generator has been aligned inside a fixed cyclic subgroup of the target model,
projective equivalence becomes an explicit affine congruence constraint on exponent data.
In our paper, the extra alignment hypothesis
$P[S_E]P^{-1}\in\langle[S_F]\rangle$
is obtained  from orbit equivalence alone 
 by a short stabilizer argument
(the ``alignment inside the stabilizer'' step in the proof of Lemma~\ref{lem:conj-implies-affine-exponents-correct}).
Outside the present $\PGL(2,q)$-stabilizer setting, such alignment need not be automatic and is therefore stated explicitly here.
\end{remark}

\medskip
In our concrete setting, Definition~\ref{def:monomial-arc} is the case $r=3$ with
$E=\{0,1,a,a+1\}$ (up to permuting coordinates).
Moreover, Lemma~\ref{lem:conj-implies-affine-exponents-correct} is exactly the corresponding application of
Theorem~\ref{thm:spectral-rigidity-practical} once the cyclic generators have been aligned inside the stabilizer
of the target model (cf.\ Remark~\ref{rem:alignment}).
Together with the short arithmetic analysis in Lemma~\ref{lem:four-point-affine}, this yields the descent criterion
(Theorem~\ref{thm:geom-main}) and hence the MDS test for $\mathcal{C}_{(q,q+1,3,h)}$
(Theorem~\ref{thm:main-coding}).

\medskip
More broadly, Theorem~\ref{thm:spectral-rigidity-practical} 
isolates the spectral step  as a black box:
whenever a regular cyclic symmetry becomes diagonal over an extension field and an alignment of cyclic generators is available,
projective equivalence reduces to checking an affine congruence relation on exponent data, $E\equiv v+uF\pmod n$.
The remaining difficulty is then combinatorial: solving this affine relation for the specific exponent patterns imposed by the geometry
(or by a code family).
We expect this viewpoint to be useful in other classification problems for cyclic configurations in projective spaces and for
cyclic/constacyclic MDS phenomena in larger codimension, where the alignment step may come either from an explicit stabilizer analysis
(as in the present paper) or from additional structural input.

\appendix
\section{Deferred proofs for Lemmas \ref{lem:PGL2-lift} and \ref{lem:phi-conjugates-singer}}
 \label{app:deferred-proofs}

\begin{proof}[Proof of Lemma~\ref{lem:PGL2-lift}]
First, \eqref{eq:psi-def} is well-defined on projective points:
if $[x:y]=[\lambda x:\lambda y]$ with $\lambda\in\F_q^\times$, then each
coordinate in \eqref{eq:psi-def} is multiplied by $\lambda^{\sigma+1}$, so
the resulting point of $\PG(3,q)$ is unchanged.

If $x\neq 0$, write $t=y/x\in\F_q$. Then
$
\psi([x:y])=[\,1:t:t^{\sigma}:t^{\sigma+1}\,]=P_t.
$
If $x=0$, then $[x:y]=[0:1]$ and $\psi([0:1])=[0:0:0:1]=P_\infty$.
Since $|\PG(1,q)|=q+1=|A_{e}|$ and $\psi$
is clearly surjective, it is a
bijection.

Fix $M=\begin{bmatrix}a&b\\ c&d\end{bmatrix}\in\mathrm{GL}(2,q)$ and $\tau=[x:y]\in\PG(1,q)$.
Put
$$[x':y']:=M\cdot[x:y]=[ax+by:cx+dy].$$
We compute $\psi([x':y'])$ in the homogeneous coordinates
$
(x'^{\sigma+1},\ x'^\sigma y',\ x' y'^\sigma,\ y'^{\sigma+1}).
$
Since $\sigma=2^e$ is a Frobenius power, we have
\begin{align*}
x'^{\sigma+1}
&=(ax+by)^{\sigma+1}=(ax+by)^\sigma(ax+by)\\
&=(a^\sigma x^\sigma+b^\sigma y^\sigma)(ax+by)\\
&=a^{\sigma+1}x^{\sigma+1}+a^\sigma b\,x^\sigma y
+ab^\sigma\,x y^\sigma + b^{\sigma+1}y^{\sigma+1},\\[1ex]
x'^\sigma y'
&=(ax+by)^\sigma(cx+dy)\\
&=a^\sigma c\,x^{\sigma+1}+a^\sigma d\,x^\sigma y
+b^\sigma c\,x y^\sigma + b^\sigma d\,y^{\sigma+1},\\[1ex]
x' y'^\sigma
&=(ax+by)(cx+dy)^\sigma\\
&=ac^\sigma\,x^{\sigma+1}+bc^\sigma\,x^\sigma y
+ad^\sigma\,x y^\sigma + bd^\sigma\,y^{\sigma+1},\\[1ex]
y'^{\sigma+1}
&=(cx+dy)^{\sigma+1}=(cx+dy)^\sigma(cx+dy)\\
&=(c^\sigma x^\sigma+d^\sigma y^\sigma)(cx+dy)\\
&=c^{\sigma+1}x^{\sigma+1}+c^\sigma d\,x^\sigma y
+cd^\sigma\,x y^\sigma + d^{\sigma+1}y^{\sigma+1}.
\end{align*}
These identities state exactly that the column vector representing
$\psi(M\cdot[x:y])$ is obtained by multiplying the column vector
representing $\psi([x:y])$ by the matrix $M_{a,b,c,d}$ from
\eqref{eq:Mabcd-def}. Hence \eqref{eq:equivariance} holds for all
$\tau\in\PG(1,q)$.
In particular, $\varphi(M)$ maps $A_{e}=\psi(\PG(1,q))$ to itself, so
$\varphi(\PGL(2,q))\le \mathrm{Stab}_{\PGL(4,q)}(A_{e})=G_e$.
The equivariance \eqref{eq:equivariance}
implies that $\varphi$ is a group
homomorphism: for $M_1,M_2\in\PGL(2,q)$
and all $\tau\in\PG(1,q)$,
\[
\varphi(M_1M_2)\psi(\tau)=\psi(M_1M_2\cdot\tau)
=\varphi(M_1)\psi(M_2\cdot\tau)
=\varphi(M_1)\varphi(M_2)\psi(\tau).
\]
Since $\psi(\PG(1,q))=A_{e}$ spans $\PG(3,q)$ and contains at least five
points in general position (because $|A_{e}|=q+1\ge 9$ and no four points
of an arc lie in a plane), a projectivity of $\PG(3,q)$ is uniquely
determined by its action on $A_{e}$. Thus
$\varphi(M_1M_2)=\varphi(M_1)\varphi(M_2)$ in $\PGL(4,q)$.

If $\varphi(M)$ is the identity in $\PGL(4,q)$, then by \eqref{eq:equivariance}
we have $\psi(M\cdot\tau)=\psi(\tau)$ for all $\tau\in\PG(1,q)$.
By bijectivity of $\psi$ this forces $M\cdot\tau=\tau$ for all
$\tau\in\PG(1,q)$, hence $M$ is the identity in $\PGL(2,q)$.
Therefore $\varphi$ is injective.

Finally, for $q\geq8$ it is a classical result
(see \cite[Theorem~21.3.17]{Hir1};
also recalled in \cite{CP})
that the full stabilizer of $A_{e}$ in $\PGL(4,q)$ is isomorphic to
$\PGL(2,q)$ and is realized by the above lifted action. Hence
$\varphi(\PGL(2,q))=G_e$, and $G_e\simeq\PGL(2,q)$.
\end{proof}

\medskip
\begin{proof}[Proof of Lemma~\ref{lem:phi-conjugates-singer}]
\textbf{(a)}~
Let $M\in \PGL(2,q)$ be a Singer element. Over $K=\F_{q^2}$,
$M$ has   distinct fixed
points on $\PG(1,K)$, say $\{\gamma,\gamma^q\}$,
and none on $\PG(1,q)$.
Indeed, $M$ cannot fix any point of $\PG(1,q)$.
For if $x\in\PG(1,q)$ were
fixed by $M$, then
$M\in {\rm Stab}_{\PGL(2,q)}(x)$, hence ${\rm ord}(M)$ is a divisor of
$|{\rm Stab}_{\PGL(2,q)}(x)|$.
Since $\PGL(2,q)$ acts transitively on $\PG(1,q)$,
the orbit--stabilizer theorem gives
\[
|{\rm Stab}_{\PGL(2,q)}(x)|=\frac{|\PGL(2,q)|}{|\PG(1,q)|}
=\frac{q(q^2-1)}{q+1}=q(q-1).
\]
But for a Singer element ${\rm ord}(M)=q+1$,
which does not divide $q(q-1)$
(in particular, when $q=2^m$ one has $\gcd(q+1,q(q-1))=1$).
This is a
contradiction, so $M$ has no fixed point on $\PG(1,q)$.
Choose a representative matrix
\[
\widetilde M=\begin{pmatrix}a&b\\ c&d\end{pmatrix}\in\mathrm{GL}(2,q)
\quad\text{for }M,
\]
so that $M$ acts by the fractional linear transformation
\[
t\longmapsto \widetilde M(t)=\frac{at+b}{ct+d}
\quad\text{on }\PG(1,K).
\]
A point $t\in\PG(1,K)$ is fixed by $M$ iff $t=\widetilde M(t)$.
If $t\in K$ (affine coordinate), this is equivalent to
\begin{equation}\label{eq:fixed-quad}
ct^2+(d-a)t-b=0.
\end{equation}
If $c=0$ then $\widetilde M(\infty)=\infty$
\big(since $\infty = [1:0]$, $\widetilde M \left(
                                         \begin{array}{c}
                                           1 \\
                                           0 \\
                                         \end{array}
                                       \right)
=\left(
\begin{array}{c}
                                           a \\
                                           c \\
                                         \end{array}
                                       \right)$\big),
so $\infty$ would be a fixed
point in $\PG(1,q)$, contradicting regularity of $M$.
Hence $c\neq 0$, so
\eqref{eq:fixed-quad} is a genuine quadratic
polynomial over $\F_q$.

Because $M$ has no fixed point in $\PG(1,q)$, \eqref{eq:fixed-quad} has no
root in $\F_q$, hence it is irreducible over $\F_q$. Therefore it has two
distinct roots in $K=\F_{q^2}$, say $\gamma$ and $\gamma'$.
Since the coefficients lie in $\F_q$, Frobenius sends roots to roots, so
$\gamma^q$ is also a root. The two roots are distinct and both lie in
$K\setminus\F_q$, hence $\gamma'=\gamma^q$. This proves (a).

\medskip
\textbf{(b)}
Fix a root $\gamma\in K\setminus\F_q$ of \eqref{eq:fixed-quad}, so the
other root is $\gamma^q$. Define
\[
\phi(t)=\frac{t-\gamma}{t-\gamma^q}\ (t\in K),\qquad \phi(\infty)=1.
\]
This is a M\"{o}bius transformation in $\PGL(2,K)$, hence a bijection of
$\PG(1,K)$. In particular, it is injective on the subset $\PG(1,q)$.
For $t\in\F_q$ we compute using $t^q=t$:
\[
\phi(t)^q=\left(\frac{t-\gamma}{t-\gamma^q}\right)^q
=\frac{t-\gamma^q}{t-\gamma},
\]
so
\[
\phi(t)^{q+1}=\phi(t)\phi(t)^q
=\frac{t-\gamma}{t-\gamma^q}\cdot\frac{t-\gamma^q}{t-\gamma}=1.
\]
Thus $\phi(t)\in U_{q+1}$ for all $t\in\F_q$, and also $\phi(\infty)=1\in U_{q+1}$.
Hence $\phi(\PG(1,q))\subseteq U_{q+1}$.
Since $\phi$ is injective on $\PG(1,q)$ and $|\PG(1,q)|=q+1=|U_{q+1}|$,
it follows that $\phi:\PG(1,q)\to U_{q+1}$ is a bijection, proving (b).

 Identify $\PG(1,K)$ with $K\cup\{\infty\}$ via
$t\leftrightarrow [t:1]$, $0=[0:1]$, and $\infty=[1:0]$.
In order to show that \textbf{(c)} holds, we first give the following
claim:

\smallskip\noindent\textbf{Claim.}
If $g\in\PGL(2,K)$ satisfies $g(0)=0$ and $g(\infty)=\infty$, then there
exists a unique $\beta\in K^\times$ such that
\[
g(u)=\beta u \qquad \text{for all } u\in K\cup\{\infty\},
\]
where by convention $\beta\cdot\infty=\infty$.
Equivalently, $g$ is represented in $\PGL(2,K)$ by the diagonal matrix
$\mathrm{diag}(\beta,1)$.

\smallskip\noindent\emph{Proof of the Claim.}
Choose a representative matrix
$A=\begin{pmatrix}a&b\\ c&d\end{pmatrix}\in\GL(2,q^2)$ for $g$.
Then for $t\in K$ one has
\[
g(t)=\frac{at+b}{ct+d},
\]
and $g(\infty)=\infty$ if and only if $c=0$ (indeed, if $c\neq 0$ then
$g(\infty)=a/c\in K$ is finite). Hence $c=0$.
With $c=0$ we have $g(t)=\frac{at+b}{d}$, where $d\neq 0$ because
$\det(A)=ad\neq 0$.
The condition $g(0)=0$ now gives $0=g(0)=b/d$, hence $b=0$.
Therefore $A$ is diagonal:
$A=\mathrm{diag}(a,d)$ with $a,d\neq 0$, and for all $t\in K$,
\[
g(t)=\frac{a}{d}\,t.
\]
Let $\beta=a/d\in K^\times$. Then $g(u)=\beta u$ for all $u\in K$ and
also $g(\infty)=\infty=\beta\cdot\infty$, as claimed.
Uniqueness of $\beta$ is immediate from $\beta=g(1)$.
\qed

\medskip
Proof of \textbf{(c)}.
Put
\[
g \;:=\; \phi\circ M\circ \phi^{-1}\ \in\ \PGL(2,K).
\]
By \textup{(a)}, the two fixed points of $M$ on $\PG(1,K)$ are
$\gamma$ and $\gamma^q$, and by definition of $\phi$ we have
$\phi(\gamma)=0$ and $\phi(\gamma^q)=\infty$.
Hence
$$
g(0)=\phi\bigl(M(\phi^{-1}(0))\bigr)=\phi(M(\gamma))=\phi(\gamma)=0
$$
and
$$
g(\infty)=\phi\bigl(M(\phi^{-1}(\infty))\bigr)
=\phi(M(\gamma^q))=\phi(\gamma^q)=\infty.
$$
By the claim above, there exists a (unique) $\beta\in K^\times$
such that $g(u)=\beta u$ for all $u\in \PG(1,K)$.
Restricting to $\PG(1,q)$ and using \textup{(b)} (namely,
$\phi(\PG(1,q))=U_{q+1}$), we obtain for all $\tau\in\PG(1,q)$:
\[
\phi(M\cdot\tau)\;=\;(\phi\circ M)(\tau)\;=\;(g\circ \phi)(\tau)
\;=\;\beta\,\phi(\tau),
\]
equivalently $g=\phi\circ M\circ\phi^{-1}$ acts
on $U_{q+1}$ as $u\mapsto \beta u$.
Now assume $M$ is represented by a matrix
$\begin{pmatrix}a&b\\ c&d\end{pmatrix}\in\GL(2,q)$.
Next, $\phi$ is represented in $\PGL(2,K)$ by
$\begin{pmatrix}1&-\gamma\\ 1&-\gamma^q\end{pmatrix}$, so
\[
\phi(\infty)=\phi([1:0])=[1:1]=1,
\]
and hence $\phi^{-1}(1)=\infty$.
Using the uniqueness of $\beta$ from
the above claim, we get
\[
\beta \;=\; g(1)\;=\;\phi\bigl(M(\phi^{-1}(1))\bigr)\;=\;\phi(M(\infty))
\;=\;\phi\!\left(\frac{a}{c}\right)
\;=\;\frac{\frac{a}{c}-\gamma}{\frac{a}{c}-\gamma^q}
\;=\;\frac{a-c\gamma}{a-c\gamma^q}.
\]
We see that $\beta\in U_{q+1}$.
Indeed, since $a,c\in\F_q$, we have
$(a-c\gamma)^q=a-c\gamma^q$ and $(a-c\gamma^q)^q=a-c\gamma$, hence
\[
\beta^q
=\left(\frac{a-c\gamma}{a-c\gamma^q}\right)^q
=\frac{a-c\gamma^q}{a-c\gamma}
=\beta^{-1},
\]
so $\beta^{q+1}=1$ and therefore $\beta\in U_{q+1}$.

Finally,   we may represent $g$ by
$\mathrm{diag}(\beta,1)$ in $\PGL(2,K)$,
so the order of $g$ equals the order of
$\beta$ in $K^\times$.
Since $g$ is conjugate to $M$, we have
$\mathrm{ord}(g)=\mathrm{ord}(M)=q+1$,
and thus $\mathrm{ord}(\beta)=q+1$ as required.
\end{proof}


\end{document}